\newcommand{\E}{\mathbb{E}}
\DeclareMathOperator*{\flopt}{fl}
\definecolor{cerulean}{rgb}{0.16, 0.32, 0.75}
\newtheorem{model}{Model}[section]
\newtheorem{remark}{Remark}[section]
\numberwithin{theorem}{section}
\newcommand{\TheTitle}{Deterministic and Probabilistic Error Bounds for 
Floating Point Summation Algorithms} 
\newcommand{\TheAuthors}{Eric Hallman and Ilse C.F. Ipsen}
\title{{\TheTitle}\thanks{This research was supported in part by the National Science Foundation through grant DMS-1745654 and DMS-1760374.}}
\author{
  Eric Hallman\thanks{Department of Mathematics, North Carolina State University (\email{erhallma@ncsu.edu})} \and
  Ilse C.F. Ipsen\thanks{Department of Mathematics, North Carolina State University (\email{ipsen@ncsu.edu})}
}
\begin{document}
\maketitle

\begin{abstract}
We analyse the forward error in the floating point summation of real numbers, 
from algorithms that do not require
recourse to higher precision or better hardware. We derive 
informative explicit expressions,
and new deterministic and probabilistic bounds for errors in three classes
of algorithms: general summation,
shifted general summation, and compensated (sequential) summation.
Our probabilistic bounds for general and shifted general summation hold to
all orders. For compensated summation, we also present deterministic and
probabilistic first and second order bounds, with a first order
bound that differs from existing ones. 
Numerical experiments illustrate that the bounds are informative and that  
among the three algorithm classes, compensated summation is generally the most accurate 
method. 
\end{abstract}

\begin{keywords}
Rounding error analysis, floating-point arithmetic, random variables, martingales
\end{keywords}

\begin{AMS}
65G99, 60G42, 60G50
\end{AMS}

\section{Introduction}
Given $n$ real numbers $x_1, \ldots, x_n$, we consider algorithms for 
summation $s_n=x_1+\cdots +x_n$ in floating point arithmetic. The algorithms operate
in one and only one precision, without any recourse to higher precision
as in \cite{blanchard2020class}, or wider accumulators as in \cite{DemmelH03}.

We analyze the forward
error $e_n=\hat{s}_n-s_n$ in the computed sum $\hat{s}_n$ in terms of the unit roundoff $u$,
for three classes of algorithms: general summation (Section~\ref{s_general}), 
shifted general summation (Section~\ref{s_center}), 
and compensated summation (Section~\ref{s_compensated}). Numerical experiments
(Section~\ref{s_numex}) illustrate that the bounds are informative, and that,
among the three classes of algorithms, compensated summation is the most accurate
method. Hybrid algorithms as in \cite{blanchard2020class} will be
analyzed in future work.

\subsection{Contributions}
We present informative bounds, based on clear and structured derivations, 
for three classes of summation algorithms.

\paragraph{General summation}
We derive explicit expressions for the error
(Lemmas \ref{lemma:forwardErrorGenl} and~\ref{lemma:forwardErrorRec});
as well as deterministic bounds  that hold to
all orders (Theorem~\ref{t_gdet})
and depends on the height $h$ of the computational tree associated with the particular
summation order.
We present two probabilistic bounds that treat the roundoffs as random variables,
one for independent roundoffs
(Theorem \ref{thm:model1Theorem}) and one for mean-independent roundoffs 
(Theorem~\ref{thm:model2Theorem}). 
Both bounds hold to all orders, depend only on $\sqrt{h}$, and
suggest that reducing the tree height~$h$ increases the accuracy. 

\paragraph{Shifted general summation}
We present the extension of shifted \textit{sequential} summation to shifted 
\textit{general} summation (Algorithm \ref{alg:shiftSum}).
For the special case of shifted \textit{sequential} summation we present an explicit
expression for the error, and a simple probabilistic bound for independent roundoffs
(Theorem~\ref{t_sprob}). For shifted \textit{general} summation, we deduce
probabilistic bounds for mean-independent roundoffs (Theorem~\ref{t_sprob2})
and independent roundoffs (Theorem~\ref{t_sprob3}) as respective
special cases of Theorems \ref{thm:model2Theorem} and~\ref{thm:model1Theorem}.

\paragraph{Compensated summation}
We derive three different explicit expressions for the error 
(Theorems \ref{thm:compExpr1} and~\ref{t_cs4}),
and bounds that are valid to first and second order, both deterministic
(Corollary~\ref{c_cs5}, Theorem~\ref{t_cs6}) and probabilistic
(Theorem~\ref{t_cs7}, Corollary~\ref{c_cs8}). In particular (Remark~\ref{r_cfirst})
we note the discrepancy by a factor $u$ of existing bounds with ours in Corollary~\ref{c_cs5},
\begin{equation*}
|\hat{s}_n-s_n|\leq 3u\,\sum_{k=1}^n{|x_k|}+\mathcal{O}(u^2).
\end{equation*}

\subsection{Models for roundoff error}
Our analyses assume that the inputs $x_k$ are floating point numbers, i.e. 
can be stored exactly without error, and that the summation
produces no overflow or underflow.

Let $u$ denote the unit roundoff.

\paragraph{Standard model \cite{higham2002accuracy}} 
This holds for IEEE floating-point arithmetic and implies that in the absence of underflow or
overflow, the 
operations $\text{op} \in \{+,-,\times, /, \sqrt{}\}$ when applied
to floating point inputs $x$ and $y$ produce
\begin{equation}\label{model:classical}
	\flopt(x \text{ op } y) = (x \text{ op } y)(1 + \delta_{xy}), \qquad |\delta_{xy}| \leq u.
\end{equation}
 In order to derive probabilistic bounds, we treat the roundoffs
as zero-mean random variables, according to two models that differ in
the assumption of independence. 

\begin{model}\label{model:first}
In the computation of interest, the quantities $\delta_{xy}$ in the model \eqref{model:classical} associated with every pair of operands are independent random variables of mean zero. 
\end{model}

\begin{model}\label{model:second}
Let the computation of interest generate rounding errors $\delta_1,\delta_2,\ldots$ in that order. The $\delta_k$ are random variables of mean zero such that \[\E(\delta_k|\delta_1,\ldots,\delta_{k-1}) = \E(\delta_k)  = 0.\]
\end{model}
Higham and Mary \cite{higham2020sharper} note that the mean independence assumption of Model \ref{model:second} is weaker than the independence assumption of Model \ref{model:first}, but stronger than the assumption that the errors are uncorrelated. Connolly, et al.~\cite{connolly2021stochastic} observe that at least one mode of stochastic rounding produces errors satisfying Model \ref{model:second}, but with the weaker bound $|\delta_{xy}|\leq 2u$ in place of \eqref{model:classical}. 

\subsection{Probability theory}
For the derivation of the probabilistic bounds,  
we need a martingale, and a concentration inequality.

\begin{definition}[Martingale \cite{mitzenmacher2005probability}] A sequence of random variables $Z_1,\ldots, Z_n$ is a martingale with respect to the sequence $X_1,\ldots,X_n$ if, for all $k\geq 1$,
	\begin{itemize}
		\item $Z_k$ is a function of $X_1,\ldots,X_k$, 
		\item $\E[|Z_k|]<\infty$, and 
		\item $\E(Z_{k+1}| X_1,\ldots, X_{k}) = Z_{k}$. 
	\end{itemize}
\end{definition}

\begin{lemma}[Azuma-Hoeffding inequality \cite{roch2015modern}]\label{l_azuma}
Let $Z_1,\ldots,Z_n$ be a martingale with respect to a sequence $X_1,\ldots, X_n$, and let $c_k$ be constants with
	\[
		 |Z_{k} - Z_{k-1}| \leq c_k, \qquad 2\leq k\leq n.
	\]
	Then for any $\delta \in (0,1)$, with probability at least $1-\delta$,
	\begin{equation}\label{eqn:Azuma}
		|Z_n-Z_1| \leq  \left(\sum_{k=2}^nc_k^2\right)^{1/2}\sqrt{2\ln(2/\delta)}.
	\end{equation}
	\label{lemma:Azuma}
\end{lemma}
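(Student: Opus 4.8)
The statement is the classical Azuma--Hoeffding inequality, so the plan is to reproduce the standard exponential-moment (Chernoff) argument. I would first establish the one-sided tail bound $\Pr(Z_n - Z_1 \geq a) \leq \exp\big(-a^2/(2\sum_{k=2}^n c_k^2)\big)$ for every $a > 0$, and then symmetrize and invert the bound in $a$ to obtain the stated high-probability form. The starting point is the martingale difference sequence $D_k := Z_k - Z_{k-1}$ for $2 \leq k \leq n$, which satisfies $Z_n - Z_1 = \sum_{k=2}^n D_k$ together with the two properties inherited from the hypotheses: $\E(D_k \mid X_1,\ldots,X_{k-1}) = 0$ by the martingale property, and $|D_k| \leq c_k$ by assumption.

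For any $t > 0$, Markov's inequality applied to $\exp(t(Z_n - Z_1))$ gives
\[
  \Pr(Z_n - Z_1 \geq a) \;\leq\; e^{-ta}\, \E\Big[\exp\Big(t \sum_{k=2}^n D_k\Big)\Big].
\]
The key step is to peel off the increments one at a time by conditioning, writing
\[
  \E\Big[\exp\Big(t\sum_{k=2}^{n} D_k\Big)\Big]
  = \E\Big[\exp\Big(t\sum_{k=2}^{n-1} D_k\Big)\,\E\big(\exp(tD_n)\mid X_1,\ldots,X_{n-1}\big)\Big],
\]
and then invoking Hoeffding's lemma: a (conditionally) mean-zero random variable $D$ with $|D|\leq c$ satisfies $\E[\exp(tD)] \leq \exp(t^2 c^2/2)$. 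Applying this to $D_n$ conditionally and iterating down through $D_{n-1},\ldots,D_2$ yields $\E[\exp(t(Z_n - Z_1))] \leq \exp\big(\tfrac12 t^2 \sum_{k=2}^n c_k^2\big)$, so that $\Pr(Z_n - Z_1 \geq a) \leq \exp\big(-ta + \tfrac12 t^2 \sum_{k=2}^n c_k^2\big)$; choosing $t = a/\sum_{k=2}^n c_k^2$ minimizes the exponent and gives the one-sided bound.

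The remaining steps are routine. Since $-Z_1,\ldots,-Z_n$ is also a martingale with the same increment bounds, the same argument bounds $\Pr(Z_n - Z_1 \leq -a)$, and a union bound gives $\Pr(|Z_n - Z_1| \geq a) \leq 2\exp\big(-a^2/(2\sum_{k=2}^n c_k^2)\big)$. Setting the right-hand side equal to $\delta$ and solving for $a$ produces $a = \big(\sum_{k=2}^n c_k^2\big)^{1/2}\sqrt{2\ln(2/\delta)}$, which is exactly the bound in~\eqref{eqn:Azuma}. I expect the main obstacle to be Hoeffding's lemma itself: its proof bounds the cumulant generating function of a bounded mean-zero variable, which one does either by using convexity of $x\mapsto e^{tx}$ to reduce to the two-point distribution on $\{-c,c\}$, or by a Taylor expansion of the log-moment-generating function with a uniform bound on its second derivative. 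A minor point to verify along the way is that every conditional expectation appearing in the iteration is finite and that the tower property applies at each stage, which follows from $\E[|Z_k|]<\infty$ and the boundedness of the increments.
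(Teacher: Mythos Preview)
Your outline is the standard and correct Chernoff argument for the Azuma--Hoeffding inequality. Note, however, that the paper does not prove this lemma at all: it is stated as Lemma~\ref{l_azuma} with a citation to \cite{roch2015modern} and used as a black box throughout. So there is no ``paper's own proof'' to compare against; your proposal simply supplies a self-contained derivation where the paper defers to the literature.
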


If the bounds on the differences $|Z_k-Z_{k-1}|$ are permitted to fail with a small probability,
then a similar but weaker concentration inequality holds \cite{chung2006concentration}.
Specifically, if $|Z_k-Z_{k-1}|\leq c_k$ simultaneously for all $2\leq k\leq n$ with exceptional probability at most $\eta$, then \eqref{eqn:Azuma} still holds with probability at least $1 - (\delta+\eta)$.

\section{General summation}\label{s_general}
We analyse the error for general summation, presented in Algorithm \ref{alg:sum}.
After representing the summation algorithm as a computational tree,
we present error expressions and a deterministic error bound  (Section~\ref{s_gexact}),
deterministic and probabilistic bounds valid to first order
(Section~\ref{s_gfirst}), followed by two
probabilistic bounds, one for Model~\ref{model:first}  (Section~\ref{s_gprob1}) and one for Model~\ref{model:second} (Section~\ref{s_gprob2}).

\begin{algorithm}
    \centering
    \caption{General Summation \cite[Algorithm~4.1]{higham2002accuracy}} \label{alg:sum}
    \begin{algorithmic}[1]
		\REQUIRE{A set of floating point numbers $\mathcal{S} = \{x_1,\ldots,x_n\}$}
		\ENSURE{$s_n = \sum_{k=1}^n{x_k}$}
		\FOR{$k = 2:n$}
		\STATE{Remove two elements $x$ and $y$ from $\mathcal{S}$}
		\STATE{$s_k = x + y$} \label{line:summation}
		\STATE{Add $s_k$ to $\mathcal{S}$}
		\ENDFOR
    \end{algorithmic}
\end{algorithm}

Denote by $s_k$ the exact partial sum, by $\hat{s}_k$ the computed sum,
and by $e_k=\hat{s}_k-s_k$ the forward error, $2\leq k\leq n$.

\paragraph{Computational tree for Algorithm~\ref{alg:sum}}
We represent the specific ordering of summations in Algorithm~\ref{alg:sum} by
a binary tree with $2n-1$ vertices and the following properties: 
\begin{itemize}
    \item Each vertex represents a partial sum $s_k$ or an input $x_k$.
    \item Each operation $s_k = x+y$ gives rise to two edges $(x,s_k)$ and $(y,s_k)$. 
    \item The root is the output $s_n$, and the leaves are the inputs $x_1,\ldots,x_n$. 
\end{itemize}
Figure \ref{fig:DAG} shows two examples, one for sequential (recursive) summation and one for pairwise summation. 

    \begin{figure}
       \centering
       \begin{minipage}{0.45\textwidth}
         \centering
        		\begin{tikzpicture}[scale=1.2]
		\tikzstyle{every node}+=[inner sep=0pt]
		\fill (0,0) circle (0.08) node [below=5] {$x_1$}; 
		\fill (1,0) circle (0.08) node [below=5] {$x_2$};  
		\fill (1.5,0.75) circle (0.08) node [below=5] {$x_3$}; 
		\fill (2,1.5) circle (0.08) node [below=5] {$x_4$}; 
		\draw (0,0) -- (1.5,2.25) -- (2,1.5); 
		\draw (0.5,0.75)--(1,0); 
		\draw (1.5,0.75) -- (1,1.5); 
	    \fill (0.5,0.75) circle (0.08) node [above left=3] {$s_2$};
		\fill (1,1.5) circle (0.08) node [above left =3] {$s_3$};
		\fill (1.5,2.25) circle (0.08) node [above left=3] {$s_4$};
		\end{tikzpicture}
      \end{minipage}\hfill
      \begin{minipage}{0.45\textwidth}
          \centering
        \begin{tikzpicture}[scale=1.2]
		\tikzstyle{every node}+=[inner sep=0pt]
		\fill (0,0) circle (0.08) node [below=5] {$x_1$}; 
		\fill (1,0) circle (0.08) node [below=5] {$x_2$};  
		\fill (2,0) circle (0.08) node [below=5] {$x_3$}; 
		\fill (3,0) circle (0.08) node [below=5] {$x_4$}; 
		\draw (0,0)-- (0.5,0.75) -- (1,0); 
		\draw (2,0) -- (2.5,0.75) -- (3,0); 
		\draw (0.5,0.75) -- (1.5,1.5) -- (2.5,0.75); 
		\fill (0.5,0.75) circle (0.08) node [above=5] {$s_2$};
		\fill (2.5,0.75) circle (0.08) node [above=5] {$s_3$};
		\fill (1.5,1.5) circle (0.08) node [above=5] {$s_4$};
		\end{tikzpicture}
        \end{minipage}
        \caption{Computational trees for two different summation orderings in
        Algorithm~\ref{alg:sum} when $n=4$.
        Left: sequential (recursive) summation. Right: pairwise summation.}
        \label{fig:DAG}
    \end{figure}
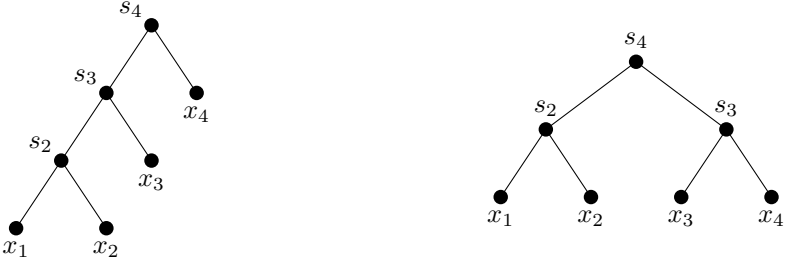

\begin{definition}
Consider the computational tree associated for Algorithm~\ref{alg:sum} applied to $n$ inputs.
\begin{itemize}
    \item The tree defines a partial ordering. 
    If $s_j$ is a descendant of $s_k$, we say $j\prec k$ 
    and $j\preceq k$ if $s_j=s_k$ is possible. 
    \item The height of a node is the length of the longest downward path from that node to a leaf. Leaves have a height of zero. 
    \item The height of the tree is the height of its root. 
    Sequential (recursive) summation yields a tree of height $n-1$, 
    while pairwise summation yields a tree of height $\lceil \ln_2 n\rceil$, 
    see Figure \ref{fig:DAG}.
\end{itemize}
\end{definition}

\subsection{Explicit expressions and deterministic bounds for the error}\label{s_gexact}
We present two expressions for the error in Algorithm~\ref{alg:sum}
(Lemmas \ref{lemma:forwardErrorGenl} and \ref{lemma:forwardErrorRec})
each geared towards a different model for the subsequent probabilistic bounds;
and two deterministic bounds (Theorem~\ref{t_gdet}).

If the computed children are $\hat{x}=x+e_x$ and $\hat{y}=y+e_y$, then 
the computed parent sum, in 
line \ref{line:summation} of Algorithm \ref{alg:sum}, is
\begin{equation*}
    \hat{s}_k = (\hat{x} + \hat{y})(1+\delta_k), \qquad 2\leq k\leq n.
\end{equation*}
Highlight the error in the children,
\begin{align*}
s_k + e_k=\hat{s}_k= ((x + e_x) + (y+e_y))(1+\delta_k)
	=(s_k+e_x+e_y)(1+\delta_k)
	\end{align*}
	to obtain the error in the parent 
	\begin{align}\label{eqn:error1}
	e_k = (s_k + e_x + e_y)(1+\delta_k) - s_k 
	= (e_x + e_y)(1+\delta_k) + s_k\delta_k, \qquad 2\leq k\leq n.
\end{align}
This means the error at node $s_k$ equals the sum of the 
children errors, perturbed slightly, plus a \textit{local error}
that is proportional to the exact partial sum $s_k$.  

The unravelled recursions in Lemma~\ref{lemma:forwardErrorGenl}
generalize the explicit expressions for sequential (recursive) summation in 
\cite[Lemma 3.1]{hallman2021refined},
\begin{equation}\label{e_eric}
e_n=\hat{s}_n-s_n=
\sum_{k=2}^{n}{s_k\delta_k\prod_{\ell=k+1}^{n}{(1+\delta_{\ell})}}
\end{equation}
to general summation algorithms.

We present two expressions that differ in their usefulness under
particular probabilistic models.
The expression in Lemma~\ref{lemma:forwardErrorGenl} works well with Model~\ref{model:first} because the sum is a martingale if summed in {\it reverse order}, see Section~\ref{s_gprob1}. Unfortunately, this approach does not work under the weaker Model~\ref{model:second}. 
This is the reason for Lemma \ref{lemma:forwardErrorRec} where the sum is a martingale under
the original ordering, see Section~\ref{s_gprob2}.

\begin{lemma}[First explicit expression]\label{lemma:forwardErrorGenl}
The error in Algorithm \ref{alg:sum} equals
	\begin{equation}\label{eqn:forwardErrorGenl}
		e_n = \hat{s}_n - s_n = \sum_{k=2}^ns_k\delta_k\prod_{k\prec j \preceq n}(1+\delta_j). 
	\end{equation}
\end{lemma}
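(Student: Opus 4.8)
The plan is to prove \eqref{eqn:forwardErrorGenl} by induction on the number of inputs $n$, using the single-step recursion \eqref{eqn:error1} as the inductive engine. The base case $n=1$ is trivial (the tree is a single leaf, $e_1=0$, and the empty sum on the right is zero); for $n=2$ the tree is one internal node with two leaves whose errors vanish, so \eqref{eqn:error1} gives $e_2 = s_2\delta_2$, which matches the right-hand side since the product over $\{j : 2\prec j\preceq 2\}$ is empty.

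For the inductive step I would consider the root $s_n$ of the computational tree, whose two children are the roots of two disjoint subtrees, say a left subtree $T_x$ on the input set $X$ summing to $x$ with computed error $e_x$, and a right subtree $T_y$ on $Y$ summing to $y$ with error $e_y$. Each subtree is itself a valid instance of Algorithm~\ref{alg:sum} on fewer than $n$ inputs, so by the inductive hypothesis $e_x = \sum_{k}s_k\delta_k\prod_{k\prec j\preceq n,\,j\in T_x}(1+\delta_j)$, where the sum ranges over internal nodes $s_k$ of $T_x$ and the product is over internal nodes of $T_x$ strictly above $s_k$ up to the subtree root; similarly for $e_y$. Here I am tacitly relabeling so that the partial-sum indices and the roundoff indices refer to the nodes of the whole tree, which is legitimate because the set of internal nodes of $T_x$ together with those of $T_y$ together with the root $s_n$ is exactly the set of all internal nodes $\{s_2,\ldots,s_n\}$.

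Now substitute these two expressions into \eqref{eqn:error1}, $e_n = (e_x+e_y)(1+\delta_n) + s_n\delta_n$. The term $s_n\delta_n$ is precisely the $k=n$ summand in \eqref{eqn:forwardErrorGenl}, since the product over $\{j : n\prec j\preceq n\}$ is empty. For the remaining terms, multiplying $e_x$ by $(1+\delta_n)$ inserts exactly the one missing factor: for an internal node $s_k$ of $T_x$, the nodes $j$ with $k\prec j\preceq n$ are the nodes of $T_x$ strictly above $s_k$ together with the root $s_n$ itself, so $(1+\delta_n)\prod_{k\prec j\preceq n,\,j\in T_x}(1+\delta_j) = \prod_{k\prec j\preceq n}(1+\delta_j)$. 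The same bookkeeping applies to $e_y$, and combining the three pieces yields $\sum_{k=2}^n s_k\delta_k\prod_{k\prec j\preceq n}(1+\delta_j)$, completing the induction.

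The only real subtlety — and the step I would be most careful about — is the indexing and the precise meaning of the partial order $\preceq$ on the tree: one must check that the set $\{j : k\prec j\preceq n\}$ really is the set of ancestors of $s_k$ in the full tree including the root, that this set splits cleanly along the subtree to which $s_k$ belongs plus the singleton $\{n\}$, and that no internal node is counted twice or omitted when the two subtrees' node sets are merged with the root. Everything else is the routine algebra of expanding \eqref{eqn:error1}. An alternative, essentially equivalent route is to unravel \eqref{eqn:error1} directly along root-to-node paths: iterating the recursion shows that the local error $s_k\delta_k$ created at node $s_k$ is transmitted to the root multiplied by $(1+\delta_j)$ for every node $j$ on the path from the parent of $s_k$ up to the root, i.e.\ for every $j$ with $k\prec j\preceq n$; summing the contributions of all internal nodes gives \eqref{eqn:forwardErrorGenl}. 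I would present the induction as the primary argument since it makes the merging of the two subtrees explicit and avoids hand-waving about "paths."
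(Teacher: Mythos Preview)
Your proposal is correct and follows the approach the paper implicitly relies on: the paper does not give a separate proof of this lemma but presents it as the result of ``unraveling'' the one-step recursion \eqref{eqn:error1}, and your induction on the tree structure is exactly how one makes that unraveling precise. Your care about the partition of the ancestor set $\{j : k\prec j\preceq n\}$ into the in-subtree ancestors plus the root is the right point to emphasize, and the argument is complete as written.
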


This means the forward error is the sum of the local errors at each node, each perturbed by subsequent rounding errors. A similar expression 
is derived in \cite[(4.2)]{higham2002accuracy} 
\begin{equation}\label{eqn:partialError}
	e_n = \sum_{k=2}^n\hat{s}_k\tilde{\delta}_k, 
\end{equation}
but in terms of the computed partial sums $\hat{s}_k$.
By contrast, our \eqref{eqn:forwardErrorGenl} 
depends only on the exact partial $s_k$, which make it  
more amenable to a probabilistic analysis. 

Write \eqref{eqn:error1} in terms of the sum $f_k$
of the children errors at node $s_k$,
\begin{equation*}
    e_k = f_k + (s_k + f_k)\delta_k,\qquad f_k\equiv e_x+e_y,\qquad
    2\leq k\leq n.
\end{equation*}
Unraveling the recurrence gives the second explicit expression for the errors.

\begin{lemma}[Second explicit expression]\label{lemma:forwardErrorRec}
The errors in Algorithm \ref{alg:sum} equal
	\begin{equation}\label{eqn:forwardErrorRec}
		e_k = \hat{s}_k - s_k = \sum_{j\preceq k}(s_j+f_j)\delta_j, \qquad 2\leq k\leq n,
	\end{equation}
and similarly 
	\begin{equation}\label{eqn:frecurrence}
	    f_k = \sum_{j \prec k}(s_j + f_j)\delta_j, \qquad 2\leq k\leq n.
	\end{equation}
\end{lemma}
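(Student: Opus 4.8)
The plan is to prove Lemma~\ref{lemma:forwardErrorRec} by unravelling the recurrence
\begin{equation*}
    e_k = f_k + (s_k + f_k)\delta_k, \qquad f_k = e_x + e_y,
\end{equation*}
working directly on the computational tree. The key observation is that the children errors $e_x$ and $e_y$ are themselves of the same form: if a child is an input leaf $x_i$, its error is $0$; if a child is an interior node $s_j$, its error is $e_j$ and satisfies the same recursion. So I would set up an induction on the height of the node $s_k$ (equivalently, on $k$ under the partial order $\preceq$). The base case is a node both of whose children are leaves: then $f_k = 0$ and $e_k = (s_k+f_k)\delta_k = s_k\delta_k$, which matches the claimed sum $\sum_{j\preceq k}(s_j+f_j)\delta_j$ since the only $j\preceq k$ is $j=k$ itself.

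For the inductive step, take an interior node $s_k$ with children $\hat x = x + e_x$, $\hat y = y + e_y$. By the induction hypothesis applied to each child that is an interior node, $e_x = \sum_{j\preceq x}(s_j+f_j)\delta_j$ and likewise for $e_y$ (with the empty sum $0$ if the child is a leaf). Adding these gives
\begin{equation*}
    f_k = e_x + e_y = \sum_{j\prec k}(s_j+f_j)\delta_j,
\end{equation*}
which is exactly \eqref{eqn:frecurrence}; here I use that the descendants of $s_k$ strictly below it are precisely the disjoint union of the descendants-or-selves of its two children, so the two sums combine without overlap into the sum over $\{j : j\prec k\}$. Substituting this into $e_k = f_k + (s_k+f_k)\delta_k$ appends the single term $(s_k+f_k)\delta_k$ to the sum, upgrading the index set from $\{j\prec k\}$ to $\{j\preceq k\}$ and yielding \eqref{eqn:forwardErrorRec}.

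The only real subtlety — and the step I would be most careful about — is the bookkeeping of the index sets: one must check that the subtrees rooted at the two children of $s_k$ are vertex-disjoint and that their union (together with $s_k$) is exactly the subtree rooted at $s_k$. This is immediate from the binary-tree structure of Algorithm~\ref{alg:sum} (each partial sum is formed exactly once, from two previously unused elements), so that every ancestor relation $j\preceq k$ factors uniquely through exactly one child of $k$. Once that is in place, the two displayed identities are proved simultaneously by the same induction, and no estimation or manipulation of the $\delta_j$ is needed — the result is an exact algebraic identity. As a sanity check, specializing the tree to sequential summation (left tree of Figure~\ref{fig:DAG}) should reproduce \eqref{e_eric} after re-expanding the $f_j$ terms, and it does.
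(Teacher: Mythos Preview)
Your proposal is correct and is exactly the approach the paper takes: the paper simply states ``unraveling the recurrence gives the second explicit expression for the errors'' without further detail, and your induction on the tree structure is precisely the unravelling they have in mind. Your write-up supplies the bookkeeping (disjointness of the two child subtrees, leaves contributing zero) that the paper leaves implicit, but there is no difference in method.
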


\begin{theorem}\label{t_gdet}
If $h$ is the height of the computational tree for Algorithm~\ref{alg:sum},
the error is bounded by
\begin{align*}
|e_n| &\leq \sum_{k=2}^n|s_k||\delta_k|\left|\prod_{k\prec j \preceq n}(1+\delta_j)\right| 
\leq u(1+u)^h\sum_{k=2}^n|s_k|\\
&\leq \frac{h^2u^2}{1-hu}\,\sum_{k=1}^n{|x_k|}, \qquad \mathrm{provided}~hu<1.
\end{align*}
\end{theorem}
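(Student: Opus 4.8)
The plan is to read off all three bounds from the first explicit error expression \eqref{eqn:forwardErrorGenl} of Lemma~\ref{lemma:forwardErrorGenl}, combining the triangle inequality with two combinatorial facts about the computational tree. The first bound is immediate: the triangle inequality applied to $e_n=\sum_{k=2}^{n}s_k\delta_k\prod_{k\prec j\preceq n}(1+\delta_j)$ gives $|e_n|\le\sum_{k=2}^{n}|s_k|\,|\delta_k|\,\bigl|\prod_{k\prec j\preceq n}(1+\delta_j)\bigr|$ with no further work.

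For the second bound I would use $|\delta_k|\le u$ and $|1+\delta_j|\le 1+u$ from the standard model \eqref{model:classical}, and bound the number of factors in each product. The point is that, for a fixed $k$, the index set $\{j:\,k\prec j\preceq n\}$ is exactly the set of proper ancestors of the node $s_k$ in the tree --- the vertices on the path from the parent of $s_k$ up to the root --- and its cardinality is the depth of $s_k$, which is at most the height $h$ of the tree. Hence $\bigl|\prod_{k\prec j\preceq n}(1+\delta_j)\bigr|\le(1+u)^{h}$ for every $k$, and summing over $k$ yields $|e_n|\le u(1+u)^{h}\sum_{k=2}^{n}|s_k|$.

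For the third bound I would assemble two more ingredients. First, a bound on $\sum_{k=2}^{n}|s_k|$: each exact partial sum $s_k$ is the sum of the inputs sitting at the leaves of the subtree rooted at $s_k$, so $|s_k|\le\sum_{i:\,x_i\preceq k}|x_i|$; since partial sums at a common depth have pairwise disjoint subtrees, the $|s_k|$ at any fixed depth add up to at most $\sum_{i=1}^{n}|x_i|$, and because the partial sums $s_2,\dots,s_n$ occupy only the $h$ depths $0,\dots,h-1$, one obtains $\sum_{k=2}^{n}|s_k|\le h\sum_{i=1}^{n}|x_i|$. Second, the elementary estimate $(1+u)^{h}\le(1-hu)^{-1}$ when $hu<1$, which follows by comparing the binomial expansion of $(1+u)^{h}$ term by term with the geometric series $\sum_{j\ge0}(hu)^{j}$ and using $\binom{h}{j}\le h^{j}$. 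Feeding both facts into the second bound produces the claimed estimate.

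I expect no genuine obstacle; the only thing that needs care is the tree bookkeeping --- identifying $\{j:\,k\prec j\preceq n\}$ with the proper ancestors of $s_k$, and bounding both the length of a root-to-node path and the number of subtrees containing a given leaf by $h$ --- so the main work is stating and using the definitions of height and of the partial order $\preceq$ correctly.
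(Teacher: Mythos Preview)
Your strategy mirrors the paper's exactly: triangle inequality on \eqref{eqn:forwardErrorGenl}, the ancestor count $\le h$ to bound each product by $(1+u)^h$, the combinatorial estimate $\sum_{k=2}^n|s_k|\le h\sum_{i=1}^n|x_i|$, and an inequality on $(1+u)^h$ to finish. The first two displayed bounds go through as you describe.

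The last step does not close, however. Your inequality $(1+u)^h\le(1-hu)^{-1}$ is correct, but substituting it gives
\[
u(1+u)^h\sum_{k=2}^n|s_k|\ \le\ \frac{u}{1-hu}\cdot h\sum_{i=1}^n|x_i|\ =\ \frac{hu}{1-hu}\sum_{i=1}^n|x_i|,
\]
not the $h^2u^2/(1-hu)$ in the statement; since $hu>h^2u^2$ for $hu<1$, your bound is the larger one and does not imply the printed inequality. The paper's own proof instead invokes $(1+u)^h\le hu/(1-hu)$ (citing \cite[Lemma~3.1]{higham2002accuracy}), which together with $\sum|s_k|\le h\sum|x_i|$ formally yields $h^2u^2/(1-hu)$. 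As an aside, that quoted inequality is itself false for small $hu$ (the left side exceeds~$1$); Higham's lemma actually gives $(1+u)^h-1\le hu/(1-hu)$, i.e.\ precisely your estimate $(1+u)^h\le(1-hu)^{-1}$. So the third inequality of the theorem, as printed, cannot hold in general, and the constant $hu/(1-hu)$ that your argument delivers is the correct one.
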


\begin{proof}
The first bound is a direct consequence of Lemma~\ref{lemma:forwardErrorGenl}.
The second one follows from 
$\sum_{k=2}^n|s_k|\leq h\sum_{k=1}^n|x_k|$,
and $(1+u)^h\leq hu/(1-hu)$ \cite[Lemma 3.1]{higham2002accuracy}.
\end{proof}

A similar bound is derived from \eqref{eqn:partialError} in \cite[(4.3)]{higham2002accuracy},
\[|e_n|\leq u\sum_{k=2}^n|\hat{s}_k|,\] 
is accompanied by the following observation:
\begin{quote}
\textit{In designing or choosing a summation method to achieve high accuracy, the aim should be to minimize the absolute values of the intermediate sums $s_k$.}
\end{quote}
Our probabilistic bounds in
Sections \ref{s_gprob1} and~\ref{s_gprob2} corroborate
this observation, the main difference being a 
dependence on $\left(\sum_k s_k^2\right)^{1/2}$ rather than $\sum_k |s_k|$.

\subsection{Deterministic and probabilistic error bounds valid to first order}\label{s_gfirst}
We present a deterministic bound (\ref{e_gdet1}), and a probabilistic bound 
(Theorem~\ref{thm:firstOrderBound}) under Model~\ref{model:second}, both valid to
first order.

Truncating in Theorem~\ref{t_gdet} yields the first order bound
\begin{equation}\label{e_gdet1}
	e_n = \sum_{k=2}^ns_k\delta_k + \mathcal{O}(h^2u^2),
\end{equation}
which is also derived in \cite[Lemma 2.1]{higham2020sharper}
for the special case sequential (recursive) summation.  

\begin{theorem}\label{thm:firstOrderBound}
Let $\delta_2,\ldots,\delta_n$ be mean-indepen\--dent, zero mean random variables.
Then for any $\delta \in (0,1)$, with probability at least $1-\delta$,
the error in Algorithm \ref{alg:sum} is bounded by
	\begin{equation*}
		|e_n| \leq u\left(\sum_{k=2}^ns_k^2\right)^{1/2}\sqrt{2\ln(2/\delta)} + \mathcal{O}(u^2). \end{equation*}
\end{theorem}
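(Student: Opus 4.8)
The plan is to reduce the problem to the first-order expression already recorded in~\eqref{e_gdet1} and then apply the Azuma--Hoeffding inequality (Lemma~\ref{l_azuma}) to the natural partial-sum martingale. By~\eqref{e_gdet1} we have, as a deterministic identity for fixed $n$, $e_n = \sum_{k=2}^n s_k\delta_k + \mathcal{O}(h^2u^2)$; since the tree height $h$ depends only on $n$, the remainder is $\mathcal{O}(u^2)$. Hence it suffices to show that $\bigl|\sum_{k=2}^n s_k\delta_k\bigr| \le u\bigl(\sum_{k=2}^n s_k^2\bigr)^{1/2}\sqrt{2\ln(2/\delta)}$ with probability at least $1-\delta$, and then combine the two estimates by the triangle inequality.

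First I would introduce $Z_1 := 0$ and $Z_m := \sum_{k=2}^m s_k\delta_k$ for $2\le m\le n$, and verify that $Z_1,\ldots,Z_n$ is a martingale with respect to $\delta_2,\ldots,\delta_n$ (with the index sets aligned by a trivial dummy variable). The first two martingale conditions are immediate: $Z_m$ is a linear function of $\delta_2,\ldots,\delta_m$, and $|Z_m|\le u\sum_{k=2}^m|s_k|<\infty$ holds deterministically because $|\delta_k|\le u$ by~\eqref{model:classical}. For the third condition the crucial point is that each \emph{exact} partial sum $s_k$ is a fixed function of the inputs $x_1,\ldots,x_n$, hence a constant; therefore $\E(Z_{m+1}\mid \delta_2,\ldots,\delta_m) = Z_m + s_{m+1}\,\E(\delta_{m+1}\mid \delta_2,\ldots,\delta_m) = Z_m$, where the conditional expectation vanishes by the mean-independence hypothesis of Model~\ref{model:second}. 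Here I would note that Algorithm~\ref{alg:sum} generates the roundoffs in the order $\delta_2,\delta_3,\ldots,\delta_n$, so Model~\ref{model:second} applies directly, and that conditioning on the sub-$\sigma$-algebra generated by $\delta_2,\ldots,\delta_m$ (rather than on all earlier roundoffs) still yields mean zero by the tower property. The increments then satisfy $|Z_k-Z_{k-1}| = |s_k\delta_k|\le u|s_k| =: c_k$ \emph{deterministically}, so there is no exceptional probability to carry, and Lemma~\ref{l_azuma} gives, with probability at least $1-\delta$, $|Z_n - Z_1| \le \bigl(\sum_{k=2}^n c_k^2\bigr)^{1/2}\sqrt{2\ln(2/\delta)} = u\bigl(\sum_{k=2}^n s_k^2\bigr)^{1/2}\sqrt{2\ln(2/\delta)}$, which is exactly what is needed.

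The main obstacle is the careful verification of the martingale property under the weaker Model~\ref{model:second}: one must use that the $s_k$ are deterministic, which is precisely why the error expression~\eqref{eqn:forwardErrorGenl} in terms of exact partial sums (rather than the Higham-style expression~\eqref{eqn:partialError} in terms of computed sums $\hat s_k$) is the right starting point, and one must keep the conditioning ordering consistent with the order in which the roundoffs arise. Everything else is routine: truncating the higher-order product factors in Lemma~\ref{lemma:forwardErrorGenl} (already done in~\eqref{e_gdet1}) and reading off $c_k = u|s_k|$. I would also remark that only mean independence, not full independence, is used, since Azuma--Hoeffding relies solely on the one-step martingale structure; this is what permits the statement under Model~\ref{model:second}.
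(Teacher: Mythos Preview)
Your proposal is correct and follows essentially the same approach as the paper: introduce the partial sums $Z_1=0$, $Z_m=\sum_{k=2}^m s_k\delta_k$, verify the martingale property under Model~\ref{model:second} using that the exact $s_k$ are deterministic, bound the increments by $u|s_k|$, and apply Lemma~\ref{l_azuma}. Your write-up is simply more explicit about the martingale verification and the handling of the $\mathcal{O}(h^2u^2)$ remainder than the paper's terse version.
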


\begin{proof}
With $\delta_1=0$, the sequence 
$Z_1=0$, $Z_i = \sum_{k=2}^i s_k\delta_k$, $2\leq i\leq n$,
is a martingale with respect to $\delta_1,\ldots,\delta_n$, satisfying
\[
    |Z_i - Z_{i-1}| \leq u|s_k|, \qquad 2\leq i \leq n. 
\]
Now apply Lemma~\ref{l_azuma}.
\end{proof}

A closely related bound for sequential (recursive) summation
is derived in \cite[Theorem~2.8]{higham2020sharper} under a model where the inputs $x_k$ are themselves random variables. 
Under this assumption, the probabilistic error bounds become significantly tighter if $\E[x_k] = 0$, leading to the conclusion that centering the data prior to summation can reduce the error, 
see  Section~\ref{s_center}.

A disadvantage of Theorem \ref{thm:firstOrderBound} is 
its uncertainty about the point where the second-order term $\mathcal{O}(u^2)$ 
starts to become relevant. For input dimensions $n > u^{-1}$ or $h>u^{-1}$ in particular, it is desirable to derive bounds that hold to all orders.\\
    
\subsection{General probabilistic bound under Model~\ref{model:first}}\label{s_gprob1}
We derive a bound under the assumption of independent roundoffs $\delta_2,\ldots,\delta_n$.

\begin{theorem}\label{thm:model1Theorem}
Let $\delta_2,\ldots,\delta_n$ be independent zero-mean random variables,
$h$ the height of the computational tree for Algorithm~\ref{alg:sum},
and $\lambda \equiv  \sqrt{2\ln(2n/\eta)}$.
Then for any $\delta \in (0,1)$ and $\eta\in (0,1)$,
with probability at least $1-(\delta+\eta)$,
the error in Algorithm~\ref{alg:sum} is bounded by
	\begin{equation*}
|\hat{s}_n - s_n| \leq u\exp(\lambda\sqrt{h}u)\left(\sum_{k=2}^n{s_k^2}\right)^{1/2}\sqrt{2\ln(2/\delta)}.
	\end{equation*}
\end{theorem}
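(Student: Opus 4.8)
The plan is to start from the first explicit expression \eqref{eqn:forwardErrorGenl}, namely $e_n = \sum_{k=2}^n s_k\delta_k\prod_{k\prec j\preceq n}(1+\delta_j)$, and to build a martingale out of the partial sums of this expression taken in \emph{reverse} order of the index $k$ (so that the product factors $\prod_{k\prec j\preceq n}(1+\delta_j)$ are already determined by the roundoffs that have been ``revealed'' before $\delta_k$ is). Concretely, I would order the roundoffs as $\delta_n,\delta_{n-1},\ldots,\delta_2$ and define $Z_0 = 0$ and $Z_i = \sum_{k=n-i+1}^n s_k\delta_k\prod_{k\prec j\preceq n}(1+\delta_j)$ for $1\le i\le n-1$, so that $Z_{n-1} = e_n$. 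Because the $\delta_j$ are independent with mean zero (Model~\ref{model:first}) and the product $\prod_{k\prec j\preceq n}(1+\delta_j)$ involves only indices $j$ with $j\succ k$, i.e.\ roundoffs already conditioned on, each increment $Z_i - Z_{i-1} = s_k\delta_k\prod_{k\prec j\preceq n}(1+\delta_j)$ has conditional expectation zero; this makes $Z_0,\ldots,Z_{n-1}$ a martingale with respect to the reversed sequence of roundoffs.

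Next I would bound the martingale increments. The issue is that $|Z_i - Z_{i-1}| \le u|s_k|\prod_{k\prec j\preceq n}(1+|\delta_j|)$, and the product can in principle be as large as $(1+u)^{h}$, but a uniform $(1+u)^h$ factor would only reproduce the deterministic-style bound. The standard device (following the all-orders analyses of Higham--Mary and Connolly et al.) is to invoke the auxiliary high-probability event that each individual roundoff satisfies $|\delta_j|\le \lambda u$ where $\lambda = \sqrt{2\ln(2n/\eta)}$: by a Hoeffding/Azuma bound applied to each $\delta_j$ separately (or a union bound over the $n-1$ roundoffs each bounded by $u$), this event holds simultaneously for all $j$ with probability at least $1-\eta$. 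On that event, the product over the at most $h$ ancestors of $s_k$ satisfies $\prod_{k\prec j\preceq n}(1+|\delta_j|)\le (1+\lambda u)^{h}\le \exp(\lambda h u)$... here I would be slightly more careful and note that what is actually needed is a bound of the form $\exp(\lambda\sqrt h\,u)$, which suggests the increments should be controlled in an $\ell^2$ sense rather than each individually: specifically $\big(\sum_k c_k^2\big)^{1/2}$ with $c_k = u|s_k|\exp(\lambda\sqrt h u)$ should emerge from a more refined estimate of how the product factors accumulate along a tree of height $h$. The cleanest route is to use the sharper observation that $\sum_{k\prec j\preceq n}\delta_j^2$ (a sum over a root path of length $\le h$) is itself at most $h\lambda^2u^2$ on the good event, and then $\prod(1+|\delta_j|)^2 \le \exp(\sum 2|\delta_j|)$ combined with Cauchy--Schwarz gives the $\sqrt h$ in the exponent.

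Having established that, conditionally on the good event, $|Z_i - Z_{i-1}| \le c_k$ with $\sum_{k=2}^n c_k^2 = u^2\exp(2\lambda\sqrt h u)\sum_{k=2}^n s_k^2$, I would apply the Azuma--Hoeffding inequality (Lemma~\ref{l_azuma}) in the form valid when the increment bounds hold only with exceptional probability $\eta$, as stated in the remark after Lemma~\ref{l_azuma}: this yields $|Z_{n-1}-Z_0| = |e_n| \le \big(\sum_{k=2}^n c_k^2\big)^{1/2}\sqrt{2\ln(2/\delta)} = u\exp(\lambda\sqrt h u)\big(\sum_{k=2}^n s_k^2\big)^{1/2}\sqrt{2\ln(2/\delta)}$ with probability at least $1-(\delta+\eta)$, which is exactly the claim. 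The main obstacle I anticipate is the middle step: getting the product-of-roundoffs factors to contribute only $\exp(\lambda\sqrt h u)$ rather than $\exp(\lambda h u)$, since a naive triangle-inequality bound on each factor gives the weaker exponent; the resolution requires exploiting that each $s_k$ has at most $h$ ancestors and summing the squared roundoffs along a root path, so that Cauchy--Schwarz converts a length-$h$ path into a $\sqrt h$ factor. A secondary technical point is verifying the martingale property carefully, i.e.\ that conditioning in reverse order genuinely makes each product factor measurable before $\delta_k$ is revealed — this is where the reverse ordering (as flagged in the discussion preceding Lemma~\ref{lemma:forwardErrorGenl}) is essential and where Model~\ref{model:second} would fail.
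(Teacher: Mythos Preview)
Your overall architecture---summing \eqref{eqn:forwardErrorGenl} in reverse order so that the product $\prod_{k\prec j\preceq n}(1+\delta_j)$ is measurable before $\delta_k$ is revealed, building a martingale, and applying Azuma--Hoeffding with increment bounds that hold only on a high-probability event---matches the paper's proof exactly. The gap is precisely the step you flag as the main obstacle: obtaining $\exp(\lambda\sqrt{h}\,u)$ rather than $\exp(\lambda h\,u)$ for the product factor.

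Your proposed resolution does not work. Bounding each $|\delta_j|$ individually is vacuous (the deterministic bound $|\delta_j|\le u$ is already stronger than $|\delta_j|\le\lambda u$, since $\lambda>1$), and Cauchy--Schwarz gives $\sum_{k\prec j\preceq n}|\delta_j|\le\sqrt{h}\,\bigl(\sum\delta_j^2\bigr)^{1/2}\le\sqrt{h}\cdot\sqrt{h}\,u=hu$; the $\sqrt{h}$ from Cauchy--Schwarz is cancelled by the $\sqrt{h}$ inside the root. There is no way to make $\sum|\delta_j|$ of order $\sqrt{h}\,u$: its expectation is genuinely of order $hu$.

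The missing idea is to control the \emph{signed} sum $\sum_{k\prec j\preceq n}\delta_j$ by concentration, not the absolute sum by Cauchy--Schwarz. The paper applies Lemma~\ref{l_azuma} to this sum directly: it has at most $h$ independent mean-zero terms, each bounded by $u$, so with failure probability $\eta'$ one has $\bigl|\sum_{k\prec j\preceq n}\delta_j\bigr|\le u\sqrt{h}\,\sqrt{2\ln(2/\eta')}$. Since $1+x\le e^x$ for all real $x$, this yields $\prod_{k\prec j\preceq n}(1+\delta_j)\le\exp\bigl(\sum\delta_j\bigr)\le\exp(\lambda\sqrt{h}\,u)$, and the product is nonnegative so the same bound holds for its absolute value. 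A union bound over the at most $n$ distinct products and the substitution $\eta'=\eta/n$ give $\lambda=\sqrt{2\ln(2n/\eta)}$ and the uniform bound $\max_k\bigl|\prod_{k\prec j\preceq n}(1+\delta_j)\bigr|\le\exp(\lambda\sqrt{h}\,u)$ with total failure probability $\eta$. From there your final paragraph goes through verbatim.
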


\begin{proof}
We start by bounding the products in the error expression \eqref{eqn:forwardErrorGenl}. Since 
\begin{equation*}
    \prod_{k\prec j \preceq n}(1+\delta_j)\leq \exp\left(\sum_{k\prec j\preceq n}\delta_j\right), \qquad 2\leq k \leq n-1, 
\end{equation*}
we can apply Lemma~\ref{l_azuma} to the sum on the right to conclude that
for each $k$,  $\eta \in (0,1)$, and $\lambda = \sqrt{2\ln(2/\eta)}$
with failure probability at most $\eta$, the bound 
\begin{equation*}
    \prod_{k\prec j \preceq n}(1+\delta_j)\leq \exp(\lambda\sqrt{h}u)
\end{equation*}
holds. Since the product is nonnegative, we can replace 
the expression on the left with its absolute value and the statement still holds. 
A union bound over at most $n$ such inequalities shows that with failure 
probability at most $n\cdot\eta$ the bound 
\begin{equation*}
    \max_{2\leq k \leq n}\left|\prod_{k\prec j\preceq n}(1+\delta_j)\right| \leq \exp(\lambda \sqrt{h}u)
\end{equation*}
holds. Then substitute $\eta \mapsto \eta/n$ and
$\lambda = \sqrt{2\ln(2n/\eta)}$.

Next, with $\delta_1=0$, the sequence
\begin{equation*}
Z_1=0,\qquad Z_i = \sum_{k=n-i+2}^ns_k\delta_k\prod_{k\prec j \preceq n}(1+\delta_j), 
\qquad 2\leq i \leq n
\end{equation*}
is a martingale with respect to $\delta_n,\delta_{n-1},\ldots,\delta_2$,
that satisfies
\begin{equation*}
    |Z_i - Z_{i-1}| \leq u\exp(\lambda\sqrt{h}u)|s_{n-i+2}|,\qquad 2\leq i \leq n
\end{equation*}
with failure probability at most $\eta$. At last apply Lemma~\ref{l_azuma} with 
additional failure probability $\delta$.
\end{proof}

The bound
    $\left(\sum_{k=2}^n s_k^2\right)^{1/2} \leq \sqrt{h}\sum_{i=1}^n|x_i|$
illustrates that, with high probability, the error in Theorem~\ref{thm:model1Theorem} is bounded by a quantity proportional to the square root of the height of the 
computational tree in Algorithm~\ref{alg:sum}. This suggests that minimizing
the height of the computational tree should reduce the error. 

As long as $\lambda \sqrt{h}u \ll 1$, the probability $\eta$ can be made small, without
much effect on the overall bound in Theorem~\ref{thm:model1Theorem}.

\subsection{General probabilistic bound under Model~\ref{model:second}}\label{s_gprob2}
We derive a bound under the assumption of mean-independent roundoffs $\delta_2, \ldots,\delta_n$.
To this end we rely on the recurrence in \eqref{eqn:frecurrence} and  induction. 

\begin{lemma} \label{lemma:fBound}
Let $\delta_2,\ldots,\delta_n$ be mean-independent mean-zero random variables,
$h$ the height of the computational tree for Algorithm~\ref{alg:sum},
and $\lambda \equiv  \sqrt{2\ln(2n/\eta)}$.
For any $\eta > 0$ with $\lambda\sqrt{h}u < 1$,  the bounds
\begin{equation}\label{eqn:inductiveBound}
    |f_k| \leq \frac{\lambda u}{1-\lambda \sqrt{h}u} 
    \left(\sum_{j\prec k}s_j^2\right)^{1/2}, \qquad 2\leq k\leq n
\end{equation}
all hold simultaneously with probability at least $1-\eta$.
\end{lemma}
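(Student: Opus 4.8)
The plan is to prove \eqref{eqn:inductiveBound} by induction on the height of the node $s_k$, using the recurrence \eqref{eqn:frecurrence} together with the Azuma--Hoeffding inequality (Lemma~\ref{l_azuma}) and a union bound. The key structural observation is that $f_k=\sum_{j\prec k}(s_j+f_j)\delta_j$ is a sum over the strict descendants of $s_k$, and because of the martingale-friendly ordering promised in Section~\ref{s_gprob2}, the partial sums of this expression form a martingale with respect to $\delta_2,\ldots,\delta_n$ in their natural order (this is precisely why Lemma~\ref{lemma:forwardErrorRec} was stated in this form rather than using \eqref{eqn:forwardErrorGenl}).

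First I would set up the induction. For a leaf, $f_k=0$ trivially satisfies the bound. Assume that for every node $s_j$ with $j\prec k$ the bound $|f_j|\leq \frac{\lambda u}{1-\lambda\sqrt h u}\bigl(\sum_{i\prec j}s_i^2\bigr)^{1/2}$ holds; condition on that event. Apply Lemma~\ref{l_azuma} to the martingale whose increments are $(s_j+f_j)\delta_j$ for $j\prec k$: each increment is bounded in absolute value by $c_j = u\,|s_j+f_j| \leq u|s_j| + u|f_j|$, so
\[
|f_k| \;\leq\; u\Bigl(\sum_{j\prec k}(|s_j|+|f_j|)^2\Bigr)^{1/2}\sqrt{2\ln(2/\eta')}
\]
with failure probability at most $\eta'$ for this particular $k$. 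By the triangle inequality in $\ell^2$,
$\bigl(\sum_{j\prec k}(|s_j|+|f_j|)^2\bigr)^{1/2} \leq \bigl(\sum_{j\prec k}s_j^2\bigr)^{1/2} + \bigl(\sum_{j\prec k}f_j^2\bigr)^{1/2}$, and the inductive hypothesis bounds the second term. The main care here is bookkeeping: each $f_j$ is bounded in terms of $\sum_{i\prec j}s_i^2$, and since $j\prec k$ implies every such $i$ also satisfies $i\prec k$, and the height of $s_j$ is at least one less than that of $s_k$ (which is at most $h$), one gets $\sum_{j\prec k}f_j^2 \leq \bigl(\tfrac{\lambda u}{1-\lambda\sqrt h u}\bigr)^2\sum_{j\prec k}\bigl(\sum_{i\prec j}s_i^2\bigr)$, and the double sum $\sum_{j\prec k}\sum_{i\prec j}s_i^2$ is at most $(h-1)\sum_{i\prec k}s_i^2$ because each $s_i$ appears on at most $h-1$ ancestral paths below $s_k$. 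Plugging in $\eta'$ so that $\sqrt{2\ln(2/\eta')}=\lambda$ (i.e.\ matching the $\lambda$ in the statement after the union bound; see below) and feeding the recursion through gives, schematically,
\[
|f_k| \;\leq\; \lambda u\Bigl(\sum_{j\prec k}s_j^2\Bigr)^{1/2}\Bigl(1 + \tfrac{\lambda\sqrt{h-1}\,u}{1-\lambda\sqrt h u}\Bigr)
\;\leq\; \frac{\lambda u}{1-\lambda\sqrt h u}\Bigl(\sum_{j\prec k}s_j^2\Bigr)^{1/2},
\]
which closes the induction, using $\sqrt{h-1}\leq\sqrt h$ and $1+\tfrac{\lambda\sqrt h u}{1-\lambda\sqrt h u}=\tfrac{1}{1-\lambda\sqrt h u}$.

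Finally I would handle the probability accounting. There are at most $n$ internal nodes, so applying the above at each node with per-node failure probability $\eta/n$ and taking a union bound gives total failure probability at most $\eta$; this is exactly why $\lambda=\sqrt{2\ln(2n/\eta)}$ rather than $\sqrt{2\ln(2/\eta)}$. One subtlety worth a sentence in the writeup: the bound on the martingale increments $c_j=u|s_j+f_j|$ depends on the $f_j$, which are themselves random and only controlled on the good event; this is the ``bounded differences that may fail with small probability'' situation, and the remark after Lemma~\ref{l_azuma} (the Chung--Lu refinement) is what licenses combining the conditional Azuma bound with the inductive good event while only adding the failure probabilities. The main obstacle is precisely this conditioning issue — making sure the union bound is taken over a single global good event rather than re-randomizing at each level — together with the combinatorial estimate $\sum_{j\prec k}\sum_{i\prec j}s_i^2\leq(h-1)\sum_{i\prec k}s_i^2$, which needs the height bound to be applied correctly along root-to-leaf paths.
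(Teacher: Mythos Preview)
Your proposal is correct and follows essentially the same route as the paper: induction using the recurrence \eqref{eqn:frecurrence}, Azuma--Hoeffding on the martingale $\sum_{j}(s_j+f_j)\delta_j\mathds{1}_{j\prec k}$, the $\ell^2$ triangle inequality, the combinatorial estimate that each $s_j$ is counted at most $h$ times in the double sum, and a union bound over the (at most) $n$ internal nodes to arrive at $\lambda=\sqrt{2\ln(2n/\eta)}$. The only cosmetic differences are that the paper inducts on the index $k$ rather than on node height, uses $h$ rather than your slightly sharper $h-1$ in the double-sum bound, and invokes the Chung--Lu refinement implicitly via the remark after Lemma~\ref{l_azuma} whereas you call it out explicitly; none of these changes the argument.
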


\begin{proof}
The proof is by induction, and in terms of the failure probability $\eta$. 

\paragraph{Induction basis $k=2$}
The children of $s_2$ are both leaves and 
floating point numbers,
hence $f_2 = 0$ and \eqref{eqn:inductiveBound} holds deterministically. 
    
\paragraph{Induction hypothesis}
Assume that \eqref{eqn:inductiveBound} 
holds simultaneously for all $2\leq j \leq k-1$ with failure probability at most $(k-1)\eta/n$. 
\paragraph{Induction step}
Write \eqref{eqn:frecurrence} as 
    \begin{equation*}
        f_k = \sum_{j=2}^{k-1}(s_j+f_j)\delta_j\mathds{1}_{j\prec k}.
    \end{equation*}
With $\delta_1= 0$, the sequence  
$Z_1=0$, $Z_i = \sum_{j=2}^i(s_j + f_j)\delta_j\mathds{1}_{j\prec k}$
   is a martingale with respect to $\delta_1,\ldots, \delta_{k-1}$, 
   and by the induction hypothesis satisfies 
   \begin{equation*}
       |Z_i - Z_{i-1}| \leq \begin{cases} us_i + \frac{\lambda u^2}{1-\lambda\sqrt{h}u}\left(\sum_{j\prec i}s_j^2\right)^{1/2} & i \prec k,\\
       0 & i \nprec k, 
       \end{cases},\qquad  2\leq i \leq k-1, 
   \end{equation*}
with failure probability at most $(k-1)\eta/n$. 
Lemma~\ref{l_azuma} then implies that with additional failure probability $\eta/n$, 
   \begin{align*}
       |f_k| &\leq \lambda u \left(\sum_{j\prec k}\left(s_j + \frac{\lambda u}{1-\lambda \sqrt{h}u}\Big(\sum_{\ell\prec j}s_\ell^2\Big)^{1/2} \right)^2\right)^{1/2}\\
       &\leq \lambda u \left(\sum_{j\prec k}s_j^2\right)^{1/2} + \lambda u \left(\sum_{j\prec k}\left(\frac{\lambda u}{1-\lambda \sqrt{h}u}\Big(\sum_{\ell\prec j}s_\ell^2\Big)^{1/2} \right)^2\right)^{1/2}
       \end{align*}
       where the second inequality follows from the two-norm triangle inequality.
Exploit the fact that $s_j$ can appear at most $h$ times,
once for itself and once for each of its ancestors, to bound 
the second summand by
       \begin{align*}
  \lambda u \left(\sum_{j\prec k}\left(\frac{\lambda u}{1-\lambda \sqrt{h}u}\Big(\sum_{\ell\prec j}s_\ell^2\Big)^{1/2} \right)^2\right)^{1/2}
&= \frac{\lambda^2u^2}{1-\lambda \sqrt{h}u}\left(\sum_{j\prec k}\sum_{\ell\prec j}s_\ell^2\right)^{1/2}\\
       &\leq 
        \frac{\lambda^2u^2}{1-\lambda \sqrt{h}u}\left(h\sum_{j\prec k}s_j^2\right)^{1/2}.
       \end{align*}
       
       Combine everything 
       \begin{align*}
       |f_k| &\leq \lambda u \left(\sum_{j\prec k}s_j^2\right)^{1/2} +
   \frac{\lambda^2u^2}{1-\lambda \sqrt{h}u}\left(h\sum_{j\prec k}s_j^2\right)^{1/2}\\
       &= \lambda u\left(1 + \frac{\lambda \sqrt{h}u}{1-\lambda \sqrt{h}u}\right) \left(\sum_{j\prec k}s_j^2\right)^{1/2}
       = \frac{\lambda u}{1-\lambda \sqrt{h}u} \left(\sum_{j\prec k}s_j^2\right)^{1/2}.
   \end{align*}
 Thus \eqref{eqn:inductiveBound} holds simultaneously for all $2\leq j \leq k$ with failure probability at most $k\eta/n$. By induction
 \eqref{eqn:inductiveBound} holds for all $2\leq k\leq n$ with failure probability at most $\eta$. \end{proof}

Below is the final probabilistic bound under Model~\ref{model:second}.

\begin{theorem}\label{thm:model2Theorem}
Let $\delta_2,\ldots,\delta_n$ be mean-independent mean-zero random variables,
$h$ the height of the computational tree for Algorithm~\ref{alg:sum},
and $\lambda \equiv  \sqrt{2\ln(2n/\eta)}$.
For any $\eta > 0$ with $\lambda\sqrt{h}u < 1$ and $\delta \in (0,1)$,
with probability at least $1-(\delta+\eta)$,
the error in Algorithm~\ref{alg:sum} is bounded by
\begin{equation}\label{eqn:model2Bound}
    |\hat{s}_n -s_n| \leq \frac{u}{1-\lambda \sqrt{h}u}\left(\sum_{k=2}^n s_k^2\right)^{1/2}\sqrt{2\ln(2/\delta)}.
\end{equation}
\end{theorem}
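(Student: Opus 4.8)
The plan is to derive Theorem~\ref{thm:model2Theorem} directly from the second explicit expression in Lemma~\ref{lemma:forwardErrorRec} together with the high-probability control on the children-error sums $f_k$ supplied by Lemma~\ref{lemma:fBound}. The key observation is that $e_n = \hat{s}_n - s_n = \sum_{j\preceq n}(s_j+f_j)\delta_j$ has exactly the same algebraic structure as the recurrence for $f_k$ that was already handled inside the proof of Lemma~\ref{lemma:fBound}, except that the sum now runs over \emph{all} nodes $j\preceq n$ rather than strict descendants $j\prec k$. So conceptually this theorem is the ``$k=n$, non-strict'' instance of that lemma's induction step, and the proof should be short.

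Concretely, I would proceed as follows. First, condition on the event $\mathcal{E}$ from Lemma~\ref{lemma:fBound} that all the bounds $|f_j|\le \tfrac{\lambda u}{1-\lambda\sqrt{h}u}\bigl(\sum_{\ell\prec j}s_\ell^2\bigr)^{1/2}$ hold simultaneously; this event has probability at least $1-\eta$. Second, set $\delta_1=0$ and introduce the martingale $Z_1=0$, $Z_i=\sum_{j=2}^i (s_j+f_j)\delta_j$ for $2\le i\le n$, with respect to $\delta_1,\ldots,\delta_n$ (each $f_j$ is a function of $\delta_1,\ldots,\delta_{j-1}$, so $(s_j+f_j)\delta_j$ is a valid martingale increment under Model~\ref{model:second}). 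On $\mathcal{E}$ the increments satisfy
\begin{equation*}
|Z_i-Z_{i-1}| \le u|s_i| + \frac{\lambda u^2}{1-\lambda\sqrt{h}u}\Bigl(\sum_{\ell\prec i}s_\ell^2\Bigr)^{1/2}, \qquad 2\le i\le n.
\end{equation*}
Third, apply the Azuma--Hoeffding inequality (Lemma~\ref{l_azuma}) with the failure-probability caveat noted after it: since the increment bounds hold only on $\mathcal{E}$ (exceptional probability $\le\eta$), the conclusion $|e_n|=|Z_n-Z_1|\le\bigl(\sum_{i=2}^n c_i^2\bigr)^{1/2}\sqrt{2\ln(2/\delta)}$ holds with probability at least $1-(\delta+\eta)$, where $c_i$ is the right-hand side above.

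Finally I would simplify $\bigl(\sum_{i=2}^n c_i^2\bigr)^{1/2}$ exactly as in the induction step of Lemma~\ref{lemma:fBound}: use the two-norm triangle inequality to split it as $u\bigl(\sum_{i=2}^n s_i^2\bigr)^{1/2} + \frac{\lambda u^2}{1-\lambda\sqrt{h}u}\bigl(\sum_{i=2}^n\sum_{\ell\prec i}s_\ell^2\bigr)^{1/2}$, then bound the double sum by $h\sum_{i=2}^n s_i^2$ using the fact that each $s_\ell$ has at most $h$ ancestors, and collect terms:
\begin{equation*}
u\Bigl(1+\frac{\lambda\sqrt{h}u}{1-\lambda\sqrt{h}u}\Bigr)\Bigl(\sum_{k=2}^n s_k^2\Bigr)^{1/2} = \frac{u}{1-\lambda\sqrt{h}u}\Bigl(\sum_{k=2}^n s_k^2\Bigr)^{1/2}.
\end{equation*}
Multiplying by $\sqrt{2\ln(2/\delta)}$ gives \eqref{eqn:model2Bound}. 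I do not expect a serious obstacle here; the one point requiring care is bookkeeping the two sources of randomness — the $\eta$ from Lemma~\ref{lemma:fBound} controlling the $f_j$'s and the $\delta$ from the final Azuma application — and invoking the ``bounds may fail with small probability'' version of Azuma--Hoeffding correctly so that the two failure probabilities add rather than compound. A secondary subtlety is confirming that the $\sum_{\ell\prec i}$ ranges (strict descendants) combine over all $i\preceq n$ to give a total multiplicity of at most $h$ per node, which is immediate from the tree-height bound already used in Lemma~\ref{lemma:fBound}.
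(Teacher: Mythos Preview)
Your proposal is correct and follows essentially the same approach as the paper: apply Azuma--Hoeffding (Lemma~\ref{l_azuma}) to the martingale built from the expression \eqref{eqn:forwardErrorRec} in Lemma~\ref{lemma:forwardErrorRec}, using Lemma~\ref{lemma:fBound} to control the increment bounds $|s_j+f_j|$ with exceptional probability at most~$\eta$. The paper's proof is a two-line sketch of exactly this, and your write-up simply spells out the martingale construction, the triangle-inequality simplification, and the height-$h$ ancestor count that the paper leaves implicit.
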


\begin{proof}
Apply Lemma~\ref{l_azuma} to (\ref{eqn:forwardErrorRec})
with failure probability $\delta$, and bound $|f_k|$ with Lemma~\ref{lemma:fBound}.
\end{proof}

The bound in Theorem~\ref{thm:model2Theorem}
is nearly identical to the one in Theorem \ref{thm:model1Theorem}, 
the difference becoming significant only when $\lambda \sqrt{h}u$ is close to 1.

\section{Shifted General Summation}\label{s_center}
After discussing the motivation for shifted summation, we present 
Algorithm~\ref{alg:shiftSum} for shifted general summation; and
derive error expressions, deterministic and probabilistic bounds 
for shifted sequential (recursive) summation (Section~\ref{s_ss})
and shifted general summation (Section~\ref{s_sg}).

Shifted summation is motivated by 
work  in computer architecture \cite{DSC19,CDRS21} and formal methods
for program verification \cite{Lohar19}
where not only the roundoffs  but also the inputs are interpreted as 
random variables sampled from 
 some distribution. Then one can compute statistics for
 the total roundoff error and estimate the probability that it is bounded by $tu$
 for a given $t$.
 
Adopting instead the approach for probabilistic roundoff error analysis in \cite{higham2019new,ipsen2020probabilistic},
probabilistic bounds for random data are derived in 
\cite{higham2020sharper}, with improvements in \cite{hallman2021refined}.
The bound below suggests that sequential summation is accurate if the summands 
$x_k$ are tightly clustered around zero. 

\begin{lemma}[Theorem 5.2 in \cite{hallman2021refined}]\label{l_th52}
Let the roundoffs $\delta_k$ be independent mean-zero random variables, 
and the summands $x_k$ be independent random variables with mean~$\mu$ and 
`variance' $\max_{1\leq k\leq n}{|x_k-\mu|}\leq\sigma$. 
Then for any $0<\delta<1$, with probability at least $1-\delta$,
the error in sequential (recursive) summation is bounded by
\begin{equation*}
|\hat{s}_n-s_n|\leq (1+\gamma) 
(\lambda\,n^{3/2}\, |\mu| + \lambda^2 n \,\sigma)\, u
\end{equation*}
where $\lambda=\sqrt{2\ln{(6/\delta)}}$ and 
$\gamma=\exp{(\lambda\,(\sqrt{n}u+nu^2)/(1-u))}-1$
\end{lemma}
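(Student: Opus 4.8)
The plan is to handle the two sources of randomness — the roundoffs $\delta_k$ and the data $x_k$ — in two stages. In the first stage I freeze the inputs $x_1,\ldots,x_n$ and bound the error over the roundoffs, reusing the explicit error expression \eqref{e_eric} and the reverse-order martingale argument of Theorem~\ref{thm:model1Theorem} (sequential summation has tree height $h=n-1$); this produces a bound proportional to $u(1+\gamma)\big(\sum_{k=2}^n s_k^2\big)^{1/2}$. In the second stage I bound $\big(\sum_{k=2}^n s_k^2\big)^{1/2}$ over the randomness of the data, exploiting the decomposition $s_k = k\mu + d_k$ with $d_k \equiv \sum_{j=1}^k(x_j-\mu)$.

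For the first stage, fix $x_1,\ldots,x_n$, so the partial sums $s_k$ are constants. Arguing as in the proof of Theorem~\ref{thm:model1Theorem}, but replacing its product estimate by the refined probabilistic bound $\prod_{\ell}(1+\delta_\ell)\le 1+\gamma$ (with the stated $\gamma$, whose $nu^2/(1-u)$ term is the second-order correction, and which holds uniformly over all the partial products with failure probability at most $\delta/3$), an application of Lemma~\ref{l_azuma} — in the version tolerating exceptional probabilities — to the martingale $\sum_k s_k\delta_k\prod_{\ell>k}(1+\delta_\ell)$ summed in the order $\delta_n,\ldots,\delta_2$ yields, with a further failure probability $\delta/3$,
\[
  |\hat s_n-s_n|\le u(1+\gamma)\Big(\sum_{k=2}^n s_k^2\Big)^{1/2}\sqrt{2\ln(6/\delta)}.
\]
Each of the two steps contributes a factor $\sqrt{2\ln(2/(\delta/3))}=\lambda$; avoiding a union bound over the $n$ partial products (so that no $\ln n$ appears) forces the use of a maximal form of the product estimate. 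Since this holds for every fixed choice of the $x_k$, it holds unconditionally with probability at least $1-2\delta/3$.

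For the second stage, the $x_j$ are independent with $\E[x_j-\mu]=0$ and $|x_j-\mu|\le\sigma$, so $d_1,\ldots,d_n$ is a martingale with increments bounded by $\sigma$; a maximal Azuma--Hoeffding bound gives $\max_{2\le k\le n}|d_k|\le \lambda\sqrt n\,\sigma$ with probability at least $1-\delta/3$. On that event, the $\ell^2$ triangle inequality together with $\big(\sum_{k=2}^n k^2\big)^{1/2}\le n^{3/2}$ and $\big(\sum_{k=2}^n d_k^2\big)^{1/2}\le \lambda\sigma n$ gives $\big(\sum_{k=2}^n s_k^2\big)^{1/2}\le |\mu|n^{3/2}+\lambda\sigma n$. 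Intersecting with the event of the first stage, with probability at least $1-\delta$,
\[
  |\hat s_n-s_n|\le u(1+\gamma)\lambda\big(|\mu|n^{3/2}+\lambda\sigma n\big)=(1+\gamma)\big(\lambda n^{3/2}|\mu|+\lambda^2 n\,\sigma\big)u,
\]
which is the claimed bound.

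The step I expect to be the main obstacle is securing the $\lambda^2 n\,\sigma$ term rather than a cruder $n^{3/2}\sigma$ term: this rests on recognizing that $d_k$ is itself a sum of $k$ mean-zero, $\sigma$-bounded variables, hence only $O(\sqrt n\,\sigma)$ in magnitude, not $O(n\,\sigma)$. Beyond that, the care needed is bookkeeping: checking that freezing the data genuinely makes $\sum_k s_k\delta_k\prod_{\ell>k}(1+\delta_\ell)$ a martingale in $\delta_n,\ldots,\delta_2$, and allocating the three failure budgets of $\delta/3$ so that all the logarithms collapse into the single $\ln(6/\delta)$ inside $\lambda$.
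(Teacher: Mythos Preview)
The paper does not prove this lemma; it is quoted verbatim as Theorem~5.2 of \cite{hallman2021refined} and used only to motivate shifted summation. So there is no ``paper's own proof'' to compare against.

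That said, your outline is sound and is exactly the two-stage argument one would expect from the cited source: (i) freeze the data and apply the reverse-order martingale of Theorem~\ref{thm:model1Theorem} to \eqref{e_eric}, controlling the partial products $\prod_{\ell>k}(1+\delta_\ell)$ uniformly in $k$ by writing $\ln(1+\delta_\ell)=\delta_\ell+\rho_\ell$ with $|\rho_\ell|\le u^2/(1-u)$ and applying a \emph{maximal} Hoeffding bound to the running sums of the $\delta_\ell$ (this is what produces $\gamma=\exp(\lambda(\sqrt{n}u+nu^2)/(1-u))-1$ without the $\ln n$ that a naive union bound would cost); (ii) decompose $s_k=k\mu+d_k$, bound $\max_k|d_k|$ by a second maximal Hoeffding step, and combine via the $\ell_2$ triangle inequality and $\sum_{k\le n}k^2\le n^3$. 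The three $\delta/3$ budgets collapse the constants into $\lambda=\sqrt{2\ln(6/\delta)}$ as stated. The only places to be explicit when you write it out are the two appeals to a maximal (Doob-type) version of Azuma--Hoeffding, since the basic Lemma~\ref{l_azuma} as stated in this paper bounds only the terminal value, not the running maximum.
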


This observation has been applied to non-random data to reduce the backward error
 \cite[Section 4]{higham2020sharper}. 
The shifted algorithm for sequential (recursive) summation 
\cite[Algortihm 4.1]{higham2020sharper} is extended to general summation
in Algorithm~\ref{alg:shiftSum}.

\begin{algorithm}\caption{Shifted General Summation}\label{alg:shiftSum}
\begin{algorithmic}[1]
\REQUIRE Summands $x_k$, $1\leq k \leq n$, shift $c$
\ENSURE $s_n = \sum_{k=1}^n{x_k}$
\FOR{$k=1:n$}
\STATE $y_k=x_k-c$
\ENDFOR
\STATE $t_n$ = output of Algorithm \ref{alg:sum} on $\{y_1,\ldots,y_n\}$
\RETURN $s_n = t_n +nc$
\end{algorithmic}
\end{algorithm}

Note that the pseudo-code in Algorithm \ref{alg:shiftSum} is geared 
towards exposition. In practice, one shifts the $x_k$ immediately prior to the
summation, to avoid allocating additional storage for the $y_k$.
The ideal but impractical choice for centering is the empirical mean $c=s_n/n$.
A more practical approximation is $c=(\min_k{x_k}+\max_k{x}_k)/2$.

\subsection{Shifted sequential (recursive) summation}\label{s_ss}
We present an expression for the error (\ref{e_s1}),
a deterministic bound (\ref{e_s2}), and a probabilistic bound 
under Model~\ref{model:first} (Theorem~\ref{t_sprob}).

We incorporate the final uncentering operation as the $(n+1)$st summation, and 
describe shifted sequential summation in exact arithmetic as
\begin{equation*}
t_k=\sum_{j=1}^k{y_j}=t_{k-1}+y_k, \quad 1\leq k\leq n+1, \qquad s_n=t_{n+1}=t_n+nc,
\end{equation*}
where $y_k=x_k -c$, $1\leq k\leq n$ and $y_{n+1}=nc$. In floating point arithmetic, the $\epsilon_j$ denote the (un)centering errors,
and the $\delta_j$ the summation errors.
\begin{eqnarray*}
\hat{t}_1&= &y_1 (1+\epsilon_1)(1+\delta_1), \qquad \delta_1=0\\
\hat{t}_k &= &\left(\hat{t}_{k-1}+y_k(1+\epsilon_k)\right)(1+\delta_k), \qquad 2\leq k\leq n+1.
\end{eqnarray*}
The idea behind shifted summation for non-random data is that the centered sum
\begin{equation*}
|\hat{t}_n-t_n|\leq n u\sum_{k=1}^n{|y_k|}+\mathcal{O}(u^2)
\end{equation*}
can be more accurate if centering reduces the magnitude of the partial sums and the
summands, see also the discussions after Theorems~\ref{t_gdet} and~\ref{t_sprob}.

The error expression corresponding to (\ref{e_eric}) and 
Lemma~\ref{lemma:forwardErrorGenl} is
\begin{equation}\label{e_s1}
e_n=\hat{s}_n-s_n=
\underbrace{\sum_{k=2}^{n+1}{t_k\delta_k\prod_{\ell=k+1}^{n+1}{(1+\delta_{\ell})}}}_{\text{Summation}}
+\underbrace{\sum_{k=1}^{n+1}{y_k\epsilon_k\prod_{\ell=k}^{n+1}{(1+\delta_{\ell})}}}_{\text{Centering}}
\end{equation}
with the deterministic bound
\begin{equation}\label{e_s2}
|e_n|\leq u(1+u)^n\left(\sum_{k=2}^n{|s_k-kc|}+\sum_{k=1}^n{|x_k-c|}+|s|+|nc|\right).
\end{equation}

If all $\epsilon_k=0$ and $\delta_{n+1}=0$, then $t_k=s_k$ and the error (\ref{e_s1}) 
reduces to that for plain sequential summation (\ref{e_eric}). 

The probabilistic bound below can be considered to be a special case of the general bound in
Theorem~\ref{t_sprob3}, but it requires only a single probability.

\begin{theorem}[Probabilistic bound for shifted sequential (recursive) summation]\label{t_sprob}
Let  $\delta_j$ and  $\epsilon_j$ be independent mean-zero random variables,
and $\gamma_k\equiv (1+u)^k-1$.
Then for any $0<\delta<1$, with probability at least $1-\delta$,
\begin{equation*}
|\hat{s}_n-s_n|\leq \max_{1\leq k\leq n+1}{\left(|s_k-kc|+|x_k-c|\right)} \sqrt{\frac{u\,\gamma_{2(n+2)}}{2}}\,\sqrt{2\ln(2/\delta)}.
\end{equation*}
For $n\ll1/u$ we have
\begin{equation*}
\sqrt{\frac{u\gamma_{2(n+2)}}{2}}\leq \sqrt{\frac{(n+2)u^2}{1-2(n+2)u}}\approx \sqrt{n+2}\,u.
\end{equation*}
\end{theorem}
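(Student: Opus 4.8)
The plan is to treat the summation errors $\delta_2,\ldots,\delta_{n+1}$ and the centering errors $\epsilon_1,\ldots,\epsilon_{n+1}$ together as a single collection of independent mean-zero random variables, and to express $e_n = \hat s_n - s_n$ from \eqref{e_s1} as the final value of a martingale built by exposing these variables one at a time. Concretely, I would order the $2n+2$ (or fewer, since $\delta_1 = 0$ and $\delta_{n+1}=0$) rounding variables in some fixed order, let $Z_0 = 0$ and let $Z_i$ be the conditional expectation of $e_n$ given the first $i$ of them. Because each summand in \eqref{e_s1} is linear in exactly one $\delta_k$ or $\epsilon_k$ (with the remaining factors being products of $(1+\delta_\ell)$'s that are either already exposed or, being mean-zero-perturbation factors, have conditional expectation $1$), this Doob martingale has increments that can be read off explicitly, and $Z_{\text{final}} = e_n$, $Z_0 = \E(e_n)$. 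The subtlety of ordering is exactly why the hypothesis here is Model~\ref{model:first} (full independence) rather than Model~\ref{model:second}: independence lets us take conditional expectations of the trailing product factors regardless of the exposure order.

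Next I would bound the martingale increments. Exposing $\delta_k$ changes $Z$ by a term of the form $t_k\,\delta_k\prod_{\ell>k}(1+\delta_\ell)$ plus, after taking the conditional expectation over the not-yet-exposed variables, the trailing product collapses in expectation; carrying this through, the increment from $\delta_k$ is bounded in absolute value by $|t_k|\,u\,(1+u)^{\#}$ for an appropriate exponent, and similarly the increment from $\epsilon_k$ is bounded by $|y_k|\,u\,(1+u)^{\#}$. The key algebraic observation is $t_k = s_k - kc$ (the exact centered partial sum, including $t_{n+1} = s_n$ with the convention $y_{n+1} = nc$) and $y_k = x_k - c$, so every increment is at most $\bigl(\max_k(|s_k - kc| + |x_k - c|)\bigr)$ times a factor of the shape $u(1+u)^{m}$ with $m \le n+2$. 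Squaring and summing these increment bounds over all $\le 2(n+2)$ steps, and using $\sum (1+u)^{2m} \le$ a geometric-series bound, should produce exactly $\sum_k c_k^2 \le \tfrac12\,\max_k(\cdots)^2\,u\,\gamma_{2(n+2)}$ with $\gamma_{2(n+2)} = (1+u)^{2(n+2)} - 1$; the factor $u/2$ comes from $u^2 \cdot \sum_{m} (1+u)^{2m} = u^2 \cdot \frac{(1+u)^{2(n+2)}-1}{(1+u)^2-1} = u^2 \cdot \frac{\gamma_{2(n+2)}}{2u + u^2} \le \frac{u\gamma_{2(n+2)}}{2}$. Then Lemma~\ref{l_azuma} (Azuma--Hoeffding) applied to this martingale gives, with probability at least $1-\delta$,
\[
|e_n - \E(e_n)| \le \Bigl(\textstyle\sum_k c_k^2\Bigr)^{1/2}\sqrt{2\ln(2/\delta)} \le \max_{1\le k\le n+1}\bigl(|s_k - kc| + |x_k - c|\bigr)\sqrt{\tfrac{u\,\gamma_{2(n+2)}}{2}}\,\sqrt{2\ln(2/\delta)},
\]
and since the $\delta_k,\epsilon_k$ are mean-zero and independent, $\E(e_n) = 0$ (each term in \eqref{e_s1} has a single mean-zero factor independent of the rest), so $|e_n|$ satisfies the same bound. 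The final asymptotic claim is just $\gamma_{2(n+2)} = (1+u)^{2(n+2)} - 1 \le \frac{2(n+2)u}{1 - 2(n+2)u}$ by \cite[Lemma 3.1]{higham2002accuracy}, so $\sqrt{u\gamma_{2(n+2)}/2} \le \sqrt{(n+2)u^2/(1 - 2(n+2)u)} \approx \sqrt{n+2}\,u$ for $n \ll 1/u$.

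The main obstacle I anticipate is purely bookkeeping: getting the exponents on the $(1+u)$ factors in the increment bounds exactly right so that the geometric sum telescopes to $u\gamma_{2(n+2)}/2$ rather than something slightly larger, and making sure the interleaving of $\delta$'s and $\epsilon$'s in the chosen exposure order does not inflate the count of factors beyond $2(n+2)$. One clean way to sidestep the interleaving worry is to expose the variables in the order they appear in the computation ($\epsilon_1, \delta_1, \epsilon_2, \delta_2, \ldots$), so that when $\delta_k$ or $\epsilon_k$ is exposed, the surviving product $\prod_{\ell \ge k}(1+\delta_\ell)$ (or $\prod_{\ell > k}$) consists entirely of not-yet-exposed, hence mean-$1$-in-conditional-expectation, factors — this makes the increment computation uniform and the exponent bookkeeping transparent. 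No genuinely hard inequality is needed beyond Azuma--Hoeffding and the standard $(1+u)^m$ estimate; the content is the martingale construction and the linearity-in-one-variable structure of \eqref{e_s1}.
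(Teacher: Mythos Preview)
Your overall strategy---build a martingale from \eqref{e_s1} by revealing the roundoffs one at a time, bound the increments, collapse the sum of their squares with a geometric series, and invoke Lemma~\ref{l_azuma}---is exactly what the paper does, and your handling of the geometric sum $u^2\sum_m(1+u)^{2m}\le u\gamma_{2(n+2)}/2$ and the final $\gamma_{2(n+2)}$ asymptotic is correct.

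The gap is the exposure order. You propose revealing $\epsilon_1,\delta_1,\epsilon_2,\delta_2,\ldots$ in \emph{forward} order, arguing that the trailing product $\prod_{\ell>k}(1+\delta_\ell)$ then consists of unexposed, conditional-mean-$1$ factors. That is true for the summand whose leading factor is $\delta_k$, but you have overlooked that $\delta_k$ also sits inside the trailing products of every earlier summand $t_j\delta_j\prod_{\ell>j}(1+\delta_\ell)$ and $y_j\epsilon_j\prod_{\ell\ge j}(1+\delta_\ell)$ with $j<k$; their conditional expectations therefore also move when $\delta_k$ is revealed. Carrying this through, the forward-order Doob increment from $\delta_k$ is
\[
\delta_k\Bigl(t_k+\sum_{j<k}t_j\delta_j\!\!\prod_{j<\ell<k}\!(1+\delta_\ell)+\sum_{j\le k}y_j\epsilon_j\!\!\prod_{j\le\ell<k}\!(1+\delta_\ell)\Bigr)=\frac{\delta_k\,\hat t_k}{1+\delta_k},
\]
which is \emph{not} bounded by $|t_k|\,u(1+u)^{\#}$ without circular control on $\hat t_k-t_k$. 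The paper fixes this by exposing in \emph{reverse} (last-in first-out) order: set $\xi_j=(\delta_j,\epsilon_j)$ and reveal $\xi_{n+1},\xi_n,\ldots,\xi_1$. At step~$k$, summands with larger index are already fully determined, summands with smaller index still have an unexposed mean-zero leading factor and remain at zero, and the increment is exactly the single pair of terms from \eqref{e_s1} with index $n-k+2$, their trailing products now fixed, giving the clean bound $u(1+u)^{k-1}\bigl(|t_{n-k+2}|+|y_{n-k+2}|\bigr)$. With that one change of order your argument goes through verbatim; the separate verification that $\E(e_n)=0$ becomes unnecessary since the partial-sum martingale starts at $Z_0=0$ by construction.
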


\begin{proof}
We construct a Martingale by extracting the accumulated
roundoffs in a last-in first-out manner. Set $Z_0\equiv 0$,
\begin{eqnarray*}
Z_1 &=& t_{n+1}\delta_{n+1} + y_{n+1}\epsilon_{n+1}(1+\delta_{n+1}) = 
s \delta_{n+1} +nc\,\epsilon_{n+1}(1+\delta_{n+1})\\
Z_k &=& \sum_{j=n-k+2}^{n+1}{t_j\delta_j\prod_{\ell=j+1}^{n+1}{(1+\delta_{\ell})}}
+\sum_{j=n-k+2}^{n+1}{y_j\epsilon_j\prod_{\ell=j}^{n+1}{(1+\delta_{\ell})}}, \qquad 1\leq k\leq n\\
Z_{n+1} &=& \sum_{j=2}^{n+1}{t_j\delta_j\prod_{\ell=j+1}^{n+1}{(1+\delta_{\ell})}}
+\sum_{j=1}^{n+1}{y_j\epsilon_j\prod_{\ell=j}^{n+1}{(1+\delta_{\ell})}}=\hat{s}_n-s_n.
\end{eqnarray*}
By construction, $Z_1$ is a function of $\xi_{n+1}\equiv(\delta_{n+1}, \epsilon_{n+1})$,
while  $Z_k$ is a function of $\xi_j\equiv(\delta_j, \epsilon_j)$ for $n-k+2\leq j\leq n+1$,
and finally $Z_{n+1}$ is a function of all $\xi_j\equiv(\delta_j, \epsilon_j)$ for $1\leq j\leq n+1$.

By assumption $\eta_j\equiv(\delta_j, \epsilon_j)$ are tuples of $2(n+1)$ 
independent random variables with
$\E[\delta_j]=0=\E[\epsilon_j]$ and $|\delta_j|,|\epsilon_j|\leq u$, $1\leq j\leq n+1$.
Then
\begin{equation*}
\E[Z_{k+1}|\xi_{n-k+2}, \ldots, \xi_{n+1}]=Z_k, \qquad 0\leq k\leq n.
\end{equation*}
Combining all of the above shows that $Z_0,\ldots, Z_{n+1}$ is a Martingale in 
$\xi_{n+1},\ldots, \xi_1$.

The differences are  for $2\leq k\leq n$
\begin{eqnarray*}
Z_1-Z_0 &=& t_{n+1}\delta_{n+1} + y_{n+1}\epsilon_{n+1}(1+\delta_{n+1})\\
Z_k-Z_{k-1}&=&t_{n-k+2}\delta_{n-k+2}\prod_{\ell=n-k+3}^{n+1}{(1+\delta_{\ell})}+
y_{n-k+2}\epsilon_{n-k+2}\prod_{\ell=n-k+2}^{n+1}{(1+\delta_{\ell})},\\
Z_{n+1}-Z_n &=& y_1\epsilon_1\prod_{\ell=1}^{n+1}{(1+\delta_{\ell})}.
\end{eqnarray*}
They are bounded by
\begin{eqnarray*}
|Z_1-Z_0|&\leq & c_1\equiv u(1+u)\,(|t_{n+1}|+|y_{n+1}|)=u(1+u)(|s|+|nc|)\\
|Z_k-Z_{k-1}|&\leq & c_k\equiv u(1+u)^{k-1}\,(|t_{n-k+2}|+|y_{n-k+2}|), \qquad 2\leq k\leq n\\
|Z_{n+1}-Z_n| &\leq & c_{n+1}\equiv  u\, (1+u)^n\,|y_1|
\end{eqnarray*}
Lemma~\ref{l_azuma} implies that for any $0<\delta<1$ with probability at least $\delta$
\begin{equation*}
|Z_{n+1}-Z_0|\leq \sqrt{\sum_{k=1}^{n+1}{c_k^2}}\sqrt{2\ln(2/\delta)}.
\end{equation*}
Here $|Z_{n+1}-Z_0|=|\hat{s}_n-s_n|$. With 
$v_1=|y_1|$, $v_k=|t_k|+|y_k|$, $2\leq k\leq n+1$, the sum under the
square root is bounded by 
\begin{equation*}
\sum_{k=1}^{n+1}{c_k^2}\leq\sum_{k=1}^{n+1}{u^2(1+u)^{2k}\,|v_{n+2-k}|^2}
\leq\max_{1\leq j\leq n+1}{|v_j|^2}{u^2\sum_{k=1}^{n+1}{(1+u)^{2k}}}
\end{equation*}
The trailing term is a geometric sum 
\begin{equation*}
\sum_{k=1}^{n+1}(1+u)^{2k}\leq \frac{(1+u)^{2(n+2)}-1}{(1+u)^2-1}=
\frac{\gamma_{2n}}{u^2+2u}
\end{equation*}
Combining everything gives
$\sqrt{\sum_{k=1}^{n+1}{c_k^2}}\leq\max_{1\leq j\leq n+1}{|v_j|}\sqrt{\frac{u\gamma_{2(n+2)}}{2}}$.
The inequality for $\sqrt{u\gamma_{2(n+2)}/2}$ follows from
\cite[Lemma 3.1]{higham2002accuracy}.
\end{proof}

Theorem~\ref{t_sprob} suggests that the error has a bound proportional to $\sqrt{n+2}$, and
that shifted sequential summation 
improves the accuracy over plain sequential summation if
\begin{equation*}
\max_{k}{\left(|s_k-kc|+|x_k-c|\right)}\ll \max_{k}{\left(|s_k|+|x_k|\right)},
\end{equation*}
that is, if shifting reduces the magnitude of the partial sums. This is confirmed by 
numerical experiments. However, the experiments also illustrate that shifted summation can 
decrease the accuracy in the summation of naturally centered data, such as $x_k$
being normal(0,1) random variables. 

\subsection{Shifted general summation}\label{s_sg}
We present a probabilistic bound for shifted general summation
under Model~\ref{model:second}, based on the following
computational tree.
If Algorithm \ref{alg:sum} operates with a tree of height $h$, then the 
analogous version in Algorithm \ref{alg:shiftSum} 
operates with a tree of height $h+2$ on the $2n+1$ input data 
\[x_1, -c, x_2, -c, \ldots, x_n, -c, nc\]
The leading $2n$ inputs are centered in pairs $y_k=x_k-c$ at the beginning, 
the intermediate sums are $t_k$,
and the final uncentering is $s_n=t_n+nc$.

\begin{theorem}[Probabilistic bound for shifted general summation]\label{t_sprob2}
Let $\delta_k$ be mean-independent mean-zero random variables,
$h$ the height of the computational tree in the inner call to Algorithm~\ref{alg:sum},
and $\lambda \equiv  \sqrt{2\ln(2(2n+1)/\eta)}$.
For any $\eta > 0$ with $\lambda\sqrt{h+2}u < 1$ and $\delta \in (0,1)$,
with probability at least $1-(\delta+\eta)$,
the error in Algorithm~\ref{alg:shiftSum} is bounded by
\begin{equation*}
    |\hat{s}_n - s_n| \leq \frac{u}{1-\lambda \sqrt{h+2}u}\left(s_n^2 + \sum_{k=2}^n t_k^2 + \sum_{k=1}^n y_k^2 \right)^{1/2}\sqrt{2\ln(2/\delta)}.
\end{equation*}
\end{theorem}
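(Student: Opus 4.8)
The plan is to reduce the statement to Theorem~\ref{thm:model2Theorem} by recognizing Algorithm~\ref{alg:shiftSum} as a single instance of Algorithm~\ref{alg:sum}, exactly as anticipated in the paragraph preceding the theorem. First I would make the equivalence precise: running Algorithm~\ref{alg:shiftSum} with shift $c$ is, operation for operation, the same as running Algorithm~\ref{alg:sum} on the $2n+1$ floating-point inputs $x_1,-c,x_2,-c,\ldots,x_n,-c,nc$ under the summation order in which (i) each pair $(x_k,-c)$ is first added to form $y_k=x_k-c$, (ii) the $y_k$ are then combined in the same order as the $n$ inputs of the inner call, producing the partial sums $t_2,\ldots,t_n$, and (iii) finally $t_n$ is added to the leaf $nc$ to produce $s_n=t_n+nc$. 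Each of these $2n$ additions contributes exactly one factor $(1+\delta)$ of the standard model~\eqref{model:classical}, and by hypothesis all of these rounding errors --- the $n$ centering errors, the $n-1$ inner summation errors, and the uncentering error, generated in that order --- are mean-independent and mean-zero, so Model~\ref{model:second} applies to the augmented computation.

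Next I would identify the structural parameters that Theorem~\ref{thm:model2Theorem} needs. The number of inputs is $2n+1$, so the relevant constant is $\lambda=\sqrt{2\ln(2(2n+1)/\eta)}$. For the height: attaching children $x_k,-c$ below each former leaf $y_k$ lengthens every root-to-leaf path of the inner tree by one, and grafting the uncentering node above the former root $t_n$ adds one more level, so the augmented tree has height $h+2$; the hypothesis $\lambda\sqrt{h+2}\,u<1$ is then precisely the admissibility condition of Theorem~\ref{thm:model2Theorem}. The internal nodes of the augmented tree are exactly the $n$ centering results $y_1,\ldots,y_n$, the $n-1$ inner partial sums $t_2,\ldots,t_n$, and the root $s_n$ --- $2n$ nodes in all, consistent with a binary tree on $2n+1$ leaves --- and the local error at each such node is proportional to the exact value there ($y_k$, $t_k$, or $s_n$ respectively).

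Finally I would invoke Theorem~\ref{thm:model2Theorem} verbatim for this computation, with its ``$n$'' replaced by $2n+1$, its ``$h$'' by $h+2$, and its sum $\sum_k s_k^2$ over internal nodes replaced by the value $s_n^2+\sum_{k=2}^n t_k^2+\sum_{k=1}^n y_k^2$; the conclusion is then exactly the displayed bound, valid with probability at least $1-(\delta+\eta)$.

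I expect the only genuinely delicate step to be the reduction in the first paragraph: one must check that the centering subtractions $x_k-c$ and the top-level addition $t_n+nc$ can legitimately be regarded as nodes of one Algorithm~\ref{alg:sum} tree, i.e. that $-c$ and $nc$ are treated as exactly representable leaves, and that the generation order of the rounding errors in Algorithm~\ref{alg:shiftSum} is compatible with the ordering required by Model~\ref{model:second}. Once that equivalence is in place, the remainder is bookkeeping: matching the number of inputs, the tree height, and the list of partial sums against the hypotheses of Theorem~\ref{thm:model2Theorem}.
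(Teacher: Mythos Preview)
Your proposal is correct and follows exactly the paper's approach: the paper's proof consists of the single sentence ``This is a direct consequence of Theorem~\ref{thm:model2Theorem},'' relying on the preceding paragraph that describes Algorithm~\ref{alg:shiftSum} as Algorithm~\ref{alg:sum} on $2n+1$ inputs with a tree of height $h+2$. You have simply made explicit the bookkeeping (number of inputs, tree height, list of internal nodes) that the paper leaves to the reader.
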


\begin{proof}
This is a direct consequence of Theorem \ref{thm:model2Theorem}. 
\end{proof}

A similar bound under Model \ref{model:first} follows from Theorem \ref{thm:model1Theorem}. 

\begin{theorem}[Probabilistic bound for shifted general summation]\label{t_sprob3}
Let $\delta_k$ be independent mean-zero random variables,
$h$ the height of the computational tree in the inner call to Algorithm~\ref{alg:sum},
and $\lambda \equiv  \sqrt{2\ln(2(2n+1)/\eta)}$.
For any $\eta > 0$ with $\lambda\sqrt{h+2}u < 1$ and $\delta \in (0,1)$,
with probability at least $1-(\delta+\eta)$,
the error in Algorithm~\ref{alg:shiftSum} is bounded by
\begin{equation*}
    |\hat{s}_n - s_n| \leq u\exp(\lambda \sqrt{h+2}u)\left(s_n^2 + \sum_{k=2}^n t_k^2 + \sum_{k=1}^n y_k^2 \right)^{1/2}\sqrt{2\ln(2/\delta)}.
\end{equation*}
\end{theorem}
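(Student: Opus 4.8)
The plan is to reduce Theorem~\ref{t_sprob3} to the already-proven Theorem~\ref{thm:model1Theorem} by viewing Algorithm~\ref{alg:shiftSum} as a single instance of Algorithm~\ref{alg:sum} acting on an enlarged input set, exactly as was done for Theorem~\ref{t_sprob2} via Theorem~\ref{thm:model2Theorem}. Concretely, I would observe that the centering step $y_k = x_k - c$ is the floating-point sum of the two floating-point inputs $x_k$ and $-c$, and the uncentering step $s_n = t_n + nc$ is the floating-point sum of $t_n$ and the input $nc$. Hence the whole computation is Algorithm~\ref{alg:sum} applied to the $2n+1$ inputs $x_1, -c, x_2, -c, \ldots, x_n, -c, nc$, with a computational tree obtained from the inner tree of height $h$ by grafting a two-input gadget onto each original leaf $x_k$ (adding $-c$) and one more parent at the root (adding $nc$); this tree has height $h+2$, as already noted in the paragraph preceding Theorem~\ref{t_sprob2}.

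The next step is to identify the roundoffs and the exact partial sums of this enlarged tree. The rounding errors are exactly the summation errors $\delta_k$ together with the centering/uncentering errors $\epsilon_k$ appearing in the model for Algorithm~\ref{alg:shiftSum}; under the hypothesis they are independent mean-zero random variables with magnitude at most $u$, so Model~\ref{model:first} applies to the enlarged computation. The exact partial sums at the internal nodes are precisely: the $t_k$ for $2 \le k \le n$ at the inner summation nodes, the $y_k = x_k - c$ for $1 \le k \le n$ at the centering nodes, and $s_n$ at the root uncentering node. Therefore the quantity $\sum_{k}(\text{partial sum})^2$ that appears in Theorem~\ref{thm:model1Theorem}, summed over all $2(2n+1)-1$ internal nodes, equals $s_n^2 + \sum_{k=2}^n t_k^2 + \sum_{k=1}^n y_k^2$, which is exactly the expression inside the square root in the claimed bound. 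One small bookkeeping point to state carefully is that a centering node $y_k$ whose value happens to be the trivial sum of a leaf and $-c$ still counts as an internal node with its own roundoff $\epsilon_k$ (and the first one, $y_1$, is the analogue of the $s_2$ node, contributing as well), so the node count and the index set match up.

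Finally I would invoke Theorem~\ref{thm:model1Theorem} verbatim with $n$ replaced by $2n+1$ and $h$ replaced by $h+2$: this gives, for any $\eta \in (0,1)$ and $\delta \in (0,1)$, with probability at least $1-(\delta+\eta)$, the bound
\begin{equation*}
|\hat{s}_n - s_n| \leq u\exp(\lambda\sqrt{h+2}\,u)\left(s_n^2 + \sum_{k=2}^n t_k^2 + \sum_{k=1}^n y_k^2\right)^{1/2}\sqrt{2\ln(2/\delta)},
\end{equation*}
with $\lambda = \sqrt{2\ln(2(2n+1)/\eta)}$, which is precisely the assertion; the side condition $\lambda\sqrt{h+2}\,u < 1$ is inherited as the regime in which $\exp(\lambda\sqrt{h+2}\,u)$ is a meaningful (near-one) factor, matching the hypothesis. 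The only real obstacle is the one already flagged: making airtight the claim that shifted summation is \emph{literally} an instance of general summation on the augmented input list — in particular that the standard model~\eqref{model:classical} for $\flopt(x_k - c)$ produces an error $\epsilon_k$ that plays the same structural role as an interior $\delta_j$, and that the tree height genuinely becomes $h+2$ rather than $h+1$ (because each input leaf is pushed down one level by its centering node, and the root gains one level from the uncentering node). Once that correspondence is spelled out, everything else is a direct substitution into Theorem~\ref{thm:model1Theorem}, which is why the proof in the paper can legitimately be a one-liner.
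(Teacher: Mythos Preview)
Your approach is correct and coincides with the paper's: the paper states only that ``a similar bound under Model~\ref{model:first} follows from Theorem~\ref{thm:model1Theorem},'' and the reduction you spell out---viewing Algorithm~\ref{alg:shiftSum} as Algorithm~\ref{alg:sum} on the $2n+1$ inputs $x_1,-c,\ldots,x_n,-c,nc$ with tree height $h+2$, then invoking Theorem~\ref{thm:model1Theorem}---is exactly that one-liner made explicit. One minor slip: a binary tree on $2n+1$ leaves has $2n$ internal nodes, not $2(2n+1)-1$ (that is the total vertex count), but your enumeration of the partial sums $s_n$, $t_2,\ldots,t_n$, $y_1,\ldots,y_n$ is correct and totals $2n$ as it should.
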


\section{Compensated sequential summation}\label{s_compensated}
We analyse the forward error for compensated sequential summation.

After presenting compensated summation 
(Algorithm~\ref{alg:compensated}) and the roundoff error model (\ref{e_model1}), 
we  derive three different expressions for the compensated summation error
(Section~\ref{s_cexact}), followed by 
bounds that are exact to first order (Section~\ref{s_cfirst})
and to second order (Section \ref{s_csecond}).

Algorithm~\ref{alg:compensated} is the formulation \cite[Theorem 8]{goldberg1991every}
of the `Kahan Summation Formula' \cite{kahan1965pracniques}. A version with opposite signs 
is presented in \cite[Algorithm 4.2]{higham2002accuracy}.

\begin{algorithm}\caption{Compensated Summation \cite[Theorem 8]{goldberg1991every} \cite[page 9-4]{Kahan73}}\label{alg:compensated}
\begin{algorithmic}[1]
\REQUIRE Summands $x_k$, $1\leq k\leq n$
\ENSURE $s_n = \sum_{k=1}^n{x_k}$
\STATE $s_1=x_1$, $c_1=0$
\FOR{$k=2:n$}
\STATE $y_k=x_k-c_{k-1}$
\STATE $s_k=s_{k-1}+y_k$
\STATE $c_k=(s_k-s_{k-1}) -y_k$
\ENDFOR
\RETURN $s_n$
\end{algorithmic}
\end{algorithm}

Following \cite[page 9-5]{Kahan73}, our finite precision model of 
Algorithm~\ref{alg:compensated} is 
\begin{align}\label{e_model1}
\begin{split}
    \hat{s}_1 &= s_1=x_1, \qquad \hat{c}_1= 0 \\
\hat{y}_k&= (x_k-\hat{c}_{k-1})(1+\eta_k), \qquad 2\leq k\leq n, \qquad \eta_2=0\\
\hat{s}_k &= (\hat{s}_{k-1} +\hat{y}_k)(1+\sigma_k)\\
 \hat{c}_k&= \left((\hat{s}_k-\hat{s}_{k-1})(1+\delta_k) - 
 \hat{y}_k\right)(1+\beta_k),
\end{split}
\end{align}

\subsection{Error expressions}\label{s_cexact}
We express the accumulated roundoff in Algorithm~\ref{alg:compensated}
as a recursive matrix vector product (Lemma~\ref{l_cs1}),
then unravel the recursions into an explicit form (Theorem~\ref{thm:compExpr1}),
followed by two other exact expressions derived in a different
way (Theorem~\ref{t_cs4}). 

\begin{lemma}\label{l_cs1}
With assumptions (\ref{e_model1}),  and
\begin{equation*}
\Gamma_k\equiv(1+\sigma_k)(1+\delta_k), \qquad 
\Psi_k\equiv (1+\delta_k)(1+\beta_k), \qquad 
2\leq k\leq n,
\end{equation*}
define
\begin{equation*}
P_k\equiv \begin{bmatrix} 1+\sigma_k & -(1+\eta_k)(1+\sigma_k)\\
\sigma_k\Psi_k & (1+\eta_k)(1-\Gamma_k)(1+\beta_k)
\end{bmatrix}, \qquad 2\leq k\leq n.
\end{equation*}
Then
\begin{equation*}
\begin{bmatrix}e_k\\\hat{c}_k\end{bmatrix} =
P_k\begin{bmatrix}e_{k-1}\\ \hat{c}_{k-1}\end{bmatrix}+
P_k\begin{bmatrix}s_{k-1}\\ -x_k\end{bmatrix}+\begin{bmatrix}-s_k\\0\end{bmatrix},
\qquad 2\leq k\leq n.
\end{equation*}
\end{lemma}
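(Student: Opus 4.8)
The plan is to verify the stated identity componentwise by direct substitution into the model (\ref{e_model1}); no probabilistic input is needed, the statement being purely algebraic. The only bookkeeping devices are to write $\hat s_{k-1}=s_{k-1}+e_{k-1}$ and $\hat s_k=s_k+e_k$, and to use $s_k=s_{k-1}+x_k$, so that each component of the left-hand side decomposes into a part linear in $(e_{k-1},\hat c_{k-1})$, a part linear in $(s_{k-1},-x_k)$, and an explicit residual. The derivation is uniform over $2\le k\le n$, with the prescribed $\eta_2=0$ just a particular instance.

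First I would treat the top component. Substituting $\hat y_k=(x_k-\hat c_{k-1})(1+\eta_k)$ into $\hat s_k=(\hat s_{k-1}+\hat y_k)(1+\sigma_k)$, replacing $\hat s_{k-1}$ by $s_{k-1}+e_{k-1}$, and subtracting $s_k=s_{k-1}+x_k$, I would collect the outcome according to the four quantities $e_{k-1},\hat c_{k-1},s_{k-1},x_k$. The coefficients of $e_{k-1}$ and $\hat c_{k-1}$ come out as $1+\sigma_k$ and $-(1+\eta_k)(1+\sigma_k)$, which is exactly the first row of $P_k$; the coefficients of $s_{k-1}$ and $-x_k$ are the same two numbers; and what is left over is $-(s_{k-1}+x_k)=-s_k$. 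This reproduces the first row of the claimed recursion, including the $-s_k$ in the final vector.

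For the bottom component, the key preliminary step is to rewrite the increment of $\hat s$ as $\hat s_k-\hat s_{k-1}=\sigma_k\hat s_{k-1}+(1+\sigma_k)\hat y_k$, directly from the update for $\hat s_k$. Plugging this into $\hat c_k=\bigl((\hat s_k-\hat s_{k-1})(1+\delta_k)-\hat y_k\bigr)(1+\beta_k)$ and using $\Gamma_k=(1+\sigma_k)(1+\delta_k)$ and $\Psi_k=(1+\delta_k)(1+\beta_k)$, the bracket becomes $\sigma_k(1+\delta_k)\hat s_{k-1}+(\Gamma_k-1)\hat y_k$, so that $\hat c_k=\sigma_k\Psi_k\,\hat s_{k-1}-(1-\Gamma_k)(1+\beta_k)\hat y_k$. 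Substituting $\hat s_{k-1}=s_{k-1}+e_{k-1}$ and $\hat y_k=(x_k-\hat c_{k-1})(1+\eta_k)$ and collecting, the coefficients of $e_{k-1}$ and $\hat c_{k-1}$ are $\sigma_k\Psi_k$ and $(1+\eta_k)(1-\Gamma_k)(1+\beta_k)$, namely the second row of $P_k$; the coefficients of $s_{k-1}$ and $-x_k$ are again the same; and there is no residual, matching the $0$ in the final vector. Stacking the two components gives the matrix identity.

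I do not anticipate a real obstacle, since the statement is ultimately an algebraic expansion. The main thing to get right is the cancellation in the $\hat c_k$ line, i.e.\ recognizing that $(\hat s_k-\hat s_{k-1})(1+\delta_k)-\hat y_k$ collapses to $(\Gamma_k-1)\hat y_k+\sigma_k(1+\delta_k)\hat s_{k-1}$ with no stray additive $\hat y_k$ term, so that the $\hat y_k$ coefficient is precisely $-(1-\Gamma_k)(1+\beta_k)$ and the row lines up with $P_k$. A secondary point is to keep the exact partial sum $s_k$ and the computed $\hat s_k$ strictly separate throughout, which is what makes the residual vector come out as $(-s_k,0)$ rather than something involving $\hat s_k$.
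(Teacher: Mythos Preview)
Your proposal is correct and follows essentially the same route as the paper's proof: a componentwise algebraic expansion that writes $\hat s_{k-1}=s_{k-1}+e_{k-1}$, substitutes $\hat y_k=(x_k-\hat c_{k-1})(1+\eta_k)$, and, for the $\hat c_k$ component, uses the identity $\hat s_k-\hat s_{k-1}=\sigma_k\hat s_{k-1}+(1+\sigma_k)\hat y_k$ before collecting terms into the rows of $P_k$. The paper additionally records the instance $k=2$ (where $e_1=\hat c_1=0$) explicitly, but this is not logically needed and your uniform treatment over $2\le k\le n$ covers it.
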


\begin{proof}
Since $e_1=\hat{c}_1=0$,
\begin{equation*}
\begin{bmatrix}e_2\\c_2\end{bmatrix}=P_2\begin{bmatrix}x_1\\-x_2\end{bmatrix}
+\begin{bmatrix}-s_2\\0\end{bmatrix}.
\end{equation*}
For the subsequent recursions abbreviate
\begin{eqnarray*}
p_{11}&=&1+\sigma_k, \quad p_{12}=-(1+\eta_k)(1+\sigma_k), \quad
p_{21}=\sigma_k\Psi_k=\sigma_k(1+\delta_k)(1+\beta_k),\\
p_{22}&=&(1+\eta_k)(1-\Gamma_k)(1+\beta_k)=
(1+\eta_k)\left(1-(1+\sigma_k)(1+\delta_k)\right)(1+\beta_k)
\end{eqnarray*}
To derive the recurrence for $e_k$, write
\begin{equation}\label{e_aux}
\hat{s}_k=(\hat{s}_{k-1}+\hat{y}_k)(1+\sigma_k)=(e_{k-1}+s_{k-1}+\hat{y}_k)(1+\sigma_k)
\end{equation}
Thus 
\begin{eqnarray*}
e_k&=&\hat{s}_k-s_k=(e_{k-1}+s_{k-1})(1+\sigma_k)-
(\hat{c}_{k-1}-x_k)(1+\eta_k)(1+\sigma_k)-s_k\\
&=& p_{11}(e_{k-1}+s_{k-1})+p_{12}(\hat{c}_{k-1}-x_k)-s_k.
\end{eqnarray*}
To derive the recurrence for $\hat{c}_k$, start with (\ref{e_aux})
\begin{equation*}
\hat{s}_k-\hat{s}_{k-1}=\hat{s}_{k-1}\sigma_k+\hat{y}_k(1+\sigma_k)
=(e_{k-1}+s_{k-1})\sigma_k +\hat{y}_k(1+\sigma_k).
\end{equation*}
Multiply by $1+\delta_k$ and subtract $\hat{y}_k=x_k-\hat{c}_{k-1}$
\begin{eqnarray*}
(\hat{s}_k-\hat{s}_{k-1})(1+\delta_k)-\hat{y}_k&=&
(e_{k-1}+s_{k-1})\sigma_k(1+\delta_k) +
\hat{y}_k(\underbrace{(1+\sigma_k)(1+\delta_k)}_{\Gamma_k}-1)\\
&=&(e_{k-1}+s_{k-1})\sigma_k(1+\delta_k) +
(\hat{c}_{k-1}-x_k)(1+\eta_k)(1-\Gamma_k).
\end{eqnarray*}
Multiply by $1+\beta_k$ to obtain
$\hat{c}_k=p_{21}(e_{k-1}+s_{k-1}) +p_{22}(\hat{c}_{k-1}-x_k)$.
\end{proof}

Now we unravel this recursion, so that the matrix product ends
with a matrix $\widetilde{P}_j$ all of whose elements are $\mathcal{O}(u)$.
Note the correspondence between \eqref{eqn:compErr} and the error expression in \eqref{e_eric}. 

\begin{theorem}\label{thm:compExpr1}
With assumptions (\ref{e_model1}),
the recursions in Lemma \ref{l_cs1} have the explicit form
\begin{equation}\label{eqn:compErr}
        \begin{bmatrix}
    e_k\\\hat{c}_k
    \end{bmatrix}
    = \sum_{j=2}^{k-1}(P_k\cdots P_{j+1})\widetilde{P}_j\begin{bmatrix}
    s_j \\ x_j\end{bmatrix}+
    \widetilde{P}_k\begin{bmatrix}s_k\\x_k\end{bmatrix}, 
    \qquad 2 \leq k\leq n,
    \end{equation}
where
\begin{align*}
\widetilde{P}_j
     \equiv \begin{bmatrix}
    \sigma_j & \eta_j(1+\sigma_j)\\
    \sigma_j\Psi_j & (1+\beta_j)(\delta_j + \eta_j(\Gamma_j-1)).
    \end{bmatrix}=\mathcal{O}(u), \qquad 2\leq j\leq n.
\end{align*}
\end{theorem}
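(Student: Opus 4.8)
The plan is to unravel the matrix recurrence from Lemma~\ref{l_cs1} by induction on $k$, after first rewriting a single step so that the matrix applied to the ``new'' data pair $\begin{bmatrix} s_j\\ x_j\end{bmatrix}$ has all entries of order $u$. Concretely, the recurrence in Lemma~\ref{l_cs1} reads
\begin{equation*}
\begin{bmatrix} e_k\\ \hat c_k\end{bmatrix}
= P_k\begin{bmatrix} e_{k-1}\\ \hat c_{k-1}\end{bmatrix}
+ P_k\begin{bmatrix} s_{k-1}\\ -x_k\end{bmatrix}
+\begin{bmatrix} -s_k\\ 0\end{bmatrix},
\end{equation*}
and the first task is to verify the algebraic identity
\begin{equation*}
P_k\begin{bmatrix} s_{k-1}\\ -x_k\end{bmatrix}+\begin{bmatrix} -s_k\\ 0\end{bmatrix}
=\widetilde P_k\begin{bmatrix} s_k\\ x_k\end{bmatrix},
\end{equation*}
with $\widetilde P_k$ as in the statement. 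This is a direct computation: in the first component one uses $s_k=s_{k-1}+x_k$ (exact arithmetic) so that $(1+\sigma_k)s_{k-1}+(1+\eta_k)(1+\sigma_k)x_k-s_k = \sigma_k s_k + [(1+\eta_k)(1+\sigma_k)-(1+\sigma_k)]x_k+[(1+\sigma_k)s_{k-1}-(1+\sigma_k)s_k+(1+\sigma_k)x_k]$, and the bracketed term vanishes; the surviving part is exactly the first row of $\widetilde P_k\begin{bmatrix} s_k\\ x_k\end{bmatrix}$. The second component is similar, using $p_{21}=\sigma_k\Psi_k$ and $p_{22}=(1+\eta_k)(1-\Gamma_k)(1+\beta_k)$ together with $s_{k-1}-x_k\cdot(\cdots)$ rearranged via $s_{k-1}=s_k-x_k$. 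Once this identity is in hand, the step becomes
\begin{equation*}
\begin{bmatrix} e_k\\ \hat c_k\end{bmatrix}
= P_k\begin{bmatrix} e_{k-1}\\ \hat c_{k-1}\end{bmatrix}
+\widetilde P_k\begin{bmatrix} s_k\\ x_k\end{bmatrix}.
\end{equation*}

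Next I would set up the induction. The base case $k=2$ follows from $e_1=\hat c_1=0$ and the identity just established, giving $\begin{bmatrix} e_2\\ \hat c_2\end{bmatrix}=\widetilde P_2\begin{bmatrix} s_2\\ x_2\end{bmatrix}$, which matches \eqref{eqn:compErr} (the sum $\sum_{j=2}^{1}$ being empty). For the inductive step, assume \eqref{eqn:compErr} holds for $k-1$; apply $P_k$ to both sides and add $\widetilde P_k\begin{bmatrix} s_k\\ x_k\end{bmatrix}$. The term $P_k\bigl[\sum_{j=2}^{k-2}(P_{k-1}\cdots P_{j+1})\widetilde P_j\begin{bmatrix} s_j\\ x_j\end{bmatrix}\bigr]$ becomes $\sum_{j=2}^{k-2}(P_k\cdots P_{j+1})\widetilde P_j\begin{bmatrix} s_j\\ x_j\end{bmatrix}$, the term $P_k\widetilde P_{k-1}\begin{bmatrix} s_{k-1}\\ x_{k-1}\end{bmatrix}$ supplies the $j=k-1$ summand (here $P_k\cdots P_{j+1}=P_k$), and $\widetilde P_k\begin{bmatrix} s_k\\ x_k\end{bmatrix}$ is the trailing term. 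Collecting these reproduces \eqref{eqn:compErr} for $k$, completing the induction.

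Finally I would note the claim $\widetilde P_j=\mathcal O(u)$: each of the four entries is either a single roundoff variable, a product of roundoff variables, or $\delta_j+\eta_j(\Gamma_j-1)$, and since $\Gamma_j-1=(1+\sigma_j)(1+\delta_j)-1=\sigma_j+\delta_j+\sigma_j\delta_j=\mathcal O(u)$, every entry is $\mathcal O(u)$; with $|\sigma_j|,|\delta_j|,|\eta_j|,|\beta_j|\le u$ one in fact gets explicit bounds like $|(\widetilde P_j)_{11}|\le u$, $|(\widetilde P_j)_{12}|\le u(1+u)$, etc. The only real obstacle is the bookkeeping in verifying the single-step identity $P_k\begin{bmatrix} s_{k-1}\\ -x_k\end{bmatrix}+\begin{bmatrix} -s_k\\ 0\end{bmatrix}=\widetilde P_k\begin{bmatrix} s_k\\ x_k\end{bmatrix}$ — one must be careful to use the \emph{exact} relation $s_k=s_{k-1}+x_k$ (not the floating-point one) and to track the cancellations correctly in the second row, where the $\Gamma_k$ and $\Psi_k$ abbreviations interlock; everything after that identity is a routine telescoping induction with no further subtleties.
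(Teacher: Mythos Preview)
Your proposal is correct and follows essentially the same route as the paper: rewrite a single step of the Lemma~\ref{l_cs1} recurrence as $\begin{bmatrix} e_k\\ \hat c_k\end{bmatrix}=P_k\begin{bmatrix} e_{k-1}\\ \hat c_{k-1}\end{bmatrix}+\widetilde P_k\begin{bmatrix} s_k\\ x_k\end{bmatrix}$, then unravel by induction using $e_1=\hat c_1=0$. The only cosmetic difference is that the paper verifies the single-step identity via the matrix factorisation $\widetilde P_k=P_k\begin{bmatrix}1&-1\\0&-1\end{bmatrix}-\begin{bmatrix}1&0\\0&0\end{bmatrix}$ (using $\begin{bmatrix}s_{k-1}\\-x_k\end{bmatrix}=\begin{bmatrix}1&-1\\0&-1\end{bmatrix}\begin{bmatrix}s_k\\x_k\end{bmatrix}$), whereas you do the entrywise computation directly; both rely on the exact relation $s_k=s_{k-1}+x_k$ in the same way.
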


\begin{proof}
Write the last two summands in  the recursion in Lemma~\ref{l_cs1} as
\begin{align*}
P_k\begin{bmatrix}s_{k-1}\\-x_k\end{bmatrix}+\begin{bmatrix}-s_k\\ 0 \end{bmatrix}
=P_k\begin{bmatrix}1&-1\\ 0&-1\end{bmatrix}\begin{bmatrix}s_k\\ x_k\end{bmatrix}
-\begin{bmatrix}1 &0\\0&0\end{bmatrix}\begin{bmatrix}s_k\\ x_k\end{bmatrix}=
\widetilde{P}_k\begin{bmatrix}s_{k}\\ x_k\end{bmatrix},
\end{align*}
where 
\begin{align*}
\begin{split}
        \widetilde{P}_k = P_k\begin{bmatrix}
    1 & -1 \\ 0 & -1
    \end{bmatrix} - \begin{bmatrix}
    1 & 0 \\ 0 & 0 
    \end{bmatrix}
    = \begin{bmatrix}
    \sigma_k & \eta_k(1+\sigma_k)\\
    \sigma_k\Psi_k & (1+\beta_k)(\delta_k + \eta_k(\Gamma_k-1))
    \end{bmatrix}.
\end{split}
\end{align*}
Then the recursion in Lemma~\ref{l_cs1} is identical to
\begin{align*}
\begin{bmatrix}e_k\\\hat{c}_k\end{bmatrix} =
P_k\begin{bmatrix}e_{k-1}\\ \hat{c}_{k-1}\end{bmatrix}+
\widetilde{P}_k\begin{bmatrix}s_{k}\\ x_k\end{bmatrix},
\qquad 2\leq k\leq n.
\end{align*}
Now unravel the recursion and terminate with $e_1=\hat{c}_1=0$.
\end{proof}

To strengthen the justification for the first order bound
in Section~\ref{s_cfirst}, we present two 
additional exact expressions for the error that 
are independent of Theorem~\ref{thm:compExpr1}.

\begin{theorem}\label{t_cs4}
With assumptions (\ref{e_model1}), 
the error in Algorithm~\ref{alg:compensated} equals 
\begin{align*}
e_n = s_n\sigma_n +\sum_{j=4}^n{x_j\eta_j\prod_{k=j}^n{(1+\sigma_k)}}
+\sum_{j=2}^{n-1}{\left(s_j\sigma_j-\hat{c}_j(1+\eta_{j+1})\right)
\prod_{k=j+1}^n{(1+\sigma_k)}}
\end{align*}
and
\begin{align*}
e_n= s_n\,\sigma_n+\left(X_n+E_{n-1}\right)(1+\sigma_n),
\end{align*}
where
\begin{eqnarray*}
X_n&\equiv&x_n\eta_n(1+\beta_n)+\sum_{j=2}^{n-1}{x_j(\eta_j-\delta_j)
\prod_{\ell=j}^{n-1}{(1+\beta_{\ell})(1+\eta_{\ell+1})}}\\
\Theta_k &\equiv& 1-(1+\delta_k)(1+\beta_k)(1+\eta_{k+1}),\qquad 2\leq k\leq n-1\\
E_{n-1}&\equiv& e_{n-1}\Theta_{n-1}+\sum_{j=2}^{n-2}{e_j(\Theta_j+
\delta_{j+1})\prod_{\ell=j+1}^{n-1}{(1+\beta_{\ell})(1+\eta_{\ell+1})}}.
\end{eqnarray*}
\end{theorem}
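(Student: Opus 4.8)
The plan is to work directly from the finite-precision model~\eqref{e_model1}, rather than from the matrix recursion of Lemma~\ref{l_cs1}. Both expressions start from a scalar recurrence for $e_k=\hat{s}_k-s_k$ alone: substituting $\hat{s}_{k-1}=s_{k-1}+e_{k-1}$ and $\hat{y}_k=(x_k-\hat{c}_{k-1})(1+\eta_k)$ into $\hat{s}_k=(\hat{s}_{k-1}+\hat{y}_k)(1+\sigma_k)$ and using $s_{k-1}+x_k=s_k$ gives
\begin{equation*}
e_k=s_k\sigma_k+\bigl(e_{k-1}+x_k\eta_k-\hat{c}_{k-1}(1+\eta_k)\bigr)(1+\sigma_k),\qquad 2\le k\le n.
\end{equation*}

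For the first expression I would unravel this recurrence down to the base values $e_1=\hat{c}_1=0$, extracting the forcing term $s_k\sigma_k$ at each level and carrying along the accumulated factors $\prod(1+\sigma_k)$. Grouping the $s_j\sigma_j$, the $x_j\eta_j$, and the $\hat{c}_{j-1}(1+\eta_j)$ contributions separately and re-indexing $j-1\mapsto j$ in the last group (so that $\prod_{k=j}^n(1+\sigma_k)$ becomes $\prod_{k=j+1}^n(1+\sigma_k)$) produces the stated identity; the convention $\eta_2=0$ together with $\hat{c}_1=0$ removes the lowest-index terms.

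For the second expression the point is to keep $e_{n-1}$ intact and instead rewrite $\hat{c}_{n-1}(1+\eta_n)$. A recurrence for $\hat{c}_k$ follows from~\eqref{e_model1} after noting $\hat{s}_k-\hat{s}_{k-1}=x_k+e_k-e_{k-1}$ and again $\hat{y}_k=(x_k-\hat{c}_{k-1})(1+\eta_k)$, namely
\begin{equation*}
\hat{c}_k=\bigl[x_k(\delta_k-\eta_k)+(e_k-e_{k-1})(1+\delta_k)+\hat{c}_{k-1}(1+\eta_k)\bigr](1+\beta_k),\qquad 2\le k\le n.
\end{equation*}
Unravelling this from $\hat{c}_1=0$ writes $\hat{c}_{n-1}$ as a sum with weights $\prod_\ell(1+\eta_\ell)(1+\beta_\ell)$; multiplying by $1+\eta_n$ and using commutativity of the scalar factors re-expresses those weights as $\prod_{\ell=j}^{n-1}(1+\beta_\ell)(1+\eta_{\ell+1})$. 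This splits $-\hat{c}_{n-1}(1+\eta_n)$ into an ``$x$-part'' --- precisely the sum appearing in $X_n$ --- and an ``$e$-part'' $-\sum_j(e_j-e_{j-1})(1+\delta_j)\prod_\ell(\cdots)$. I would then apply summation by parts to the $e$-part, and use the identity $(1+\delta_j)(1+\beta_j)(1+\eta_{j+1})=1-\Theta_j$ to telescope the coefficients into the pattern $\Theta_j+\delta_{j+1}$, turning the $e$-part into $E_{n-1}-e_{n-1}$. Substituting back into the $e_n$-recurrence and cancelling $e_{n-1}$ against $-e_{n-1}$ leaves $e_n=s_n\sigma_n+(X_n+E_{n-1})(1+\sigma_n)$.

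The summation-by-parts step is the main obstacle: one must track the index ranges carefully and keep the many $(1+\sigma_k)$, $(1+\eta_k)$, $(1+\beta_k)$, $(1+\delta_k)$ products aligned while repeatedly folding adjacent factors into the $\Theta_j$ form. The remaining work --- verifying the base cases $e_1=\hat{c}_1=0$ and checking that the extreme-index terms collapse under the conventions $\eta_2=0$ and $\hat{c}_1=0$ --- is routine.
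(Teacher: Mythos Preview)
The paper states Theorem~\ref{t_cs4} without proof, so there is no argument to compare against; the theorem is introduced only with the remark that it gives ``two additional exact expressions for the error that are independent of Theorem~\ref{thm:compExpr1}.'' Your plan is therefore the only proof on the table, and its overall strategy is sound: the scalar recurrence
\[
e_k=s_k\sigma_k+\bigl(e_{k-1}+x_k\eta_k-\hat{c}_{k-1}(1+\eta_k)\bigr)(1+\sigma_k)
\]
is correct, and unravelling it directly gives the first expression, while unravelling the companion recurrence for $\hat{c}_k$ and an Abel/summation-by-parts rearrangement of the $(e_j-e_{j-1})$ sums gives the second.

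Two cautions. First, the lower index $j=4$ in $\sum_{j=4}^n x_j\eta_j\prod_{k=j}^n(1+\sigma_k)$ does not drop out of the unravelling you describe: $\eta_2=0$ kills $j=2$, but nothing kills $j=3$; the honest lower limit is $j=3$. Second, in $X_n$ the leading term $x_n\eta_n(1+\beta_n)$ carries a factor $(1+\beta_n)$, yet $\beta_n$ is the rounding error in forming $\hat{c}_n$, which is computed \emph{after} $\hat{s}_n$ and cannot influence $e_n$. Your derivation will produce $x_n\eta_n$ there, not $x_n\eta_n(1+\beta_n)$. Both discrepancies look like typos in the stated theorem rather than gaps in your approach, but when you write up the details make sure you record exactly what your computation yields and flag the mismatch.
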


\subsection{First order bound}\label{s_cfirst}
We present a first-order expression and bound for the error in 
Algorithm~\ref{alg:compensated} and note the discrepancy with existing bounds
(Remark~\ref{r_cfirst}.

\begin{corollary}\label{c_cs5}
With assumptions (\ref{e_model1}), Algorithm~\ref{alg:compensated} 
has the first order error
\begin{equation*}
e_n=\hat{s}_n-s_n=s_n\,\sigma_n+x_n\eta_n+\sum_{j=2}^{n-1}{x_j(\eta_j-\delta_j)}
+\mathcal{O}(u^2),
\end{equation*}
with the bound
\begin{equation*}
|\hat{s}_n-s_n|\leq 3u\,\sum_{j=1}^{n}{|x_j|}+\mathcal{O}(u^2).
\end{equation*}
 \end{corollary}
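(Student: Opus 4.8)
The plan is to extract the first-order term from one of the exact expressions in Theorem~\ref{t_cs4} and then bound it termwise. I would start from the first identity in Theorem~\ref{t_cs4}, namely
\begin{equation*}
e_n = s_n\sigma_n +\sum_{j=4}^n{x_j\eta_j\prod_{k=j}^n{(1+\sigma_k)}}
+\sum_{j=2}^{n-1}{\left(s_j\sigma_j-\hat{c}_j(1+\eta_{j+1})\right)
\prod_{k=j+1}^n{(1+\sigma_k)}},
\end{equation*}
and observe that each factor $\prod_k(1+\sigma_k)$ equals $1+\mathcal{O}(u)$, so multiplying it against a term that is already $\mathcal{O}(u)$ (such as $x_j\eta_j$ or $s_j\sigma_j$) only changes that term by $\mathcal{O}(u^2)$. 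The one place where care is needed is the $\hat c_j$ term: I must argue that $\hat c_j = \mathcal{O}(u)$, which follows from the model \eqref{e_model1} since $\hat c_j$ is (a rounded version of) the roundoff committed when forming $\hat s_j$, i.e. $\hat c_j = -s_j\sigma_j + \mathcal{O}(u^2)$ to first order, or more crudely $|\hat c_j|\le (\text{const})\,u\sum|x_k|$. Hence $\hat c_j(1+\eta_{j+1}) = \hat c_j + \mathcal{O}(u^2)$, and the $\hat c_j$ contributions are themselves only $\mathcal{O}(u^2)$ — which is why they do not appear in the first-order formula. Collecting the surviving first-order pieces gives $s_n\sigma_n + \sum_{j=4}^n x_j\eta_j + \sum_{j=2}^{n-1}s_j\sigma_j$; but I then need to reconcile the index ranges with the claimed formula $s_n\sigma_n + x_n\eta_n + \sum_{j=2}^{n-1}x_j(\eta_j-\delta_j)$. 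The cleaner route, therefore, is to use the \emph{second} identity in Theorem~\ref{t_cs4}, $e_n = s_n\sigma_n + (X_n+E_{n-1})(1+\sigma_n)$: to first order $(1+\sigma_n)$ drops, $E_{n-1}$ is $\mathcal{O}(u^2)$ because each $e_j$ is $\mathcal{O}(u)$ and each of $\Theta_j$, $\delta_{j+1}$ is $\mathcal{O}(u)$, and $X_n$ reduces to $x_n\eta_n + \sum_{j=2}^{n-1}x_j(\eta_j-\delta_j) + \mathcal{O}(u^2)$ because every product $\prod(1+\beta_\ell)(1+\eta_{\ell+1})$ is $1+\mathcal{O}(u)$. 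This yields the stated first-order expression directly.

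For the bound, I would simply apply the triangle inequality to the first-order expression together with the standard model bound $|\sigma_n|,|\eta_j|,|\delta_j|\le u$ from \eqref{model:classical}:
\begin{equation*}
|e_n| \le u|s_n| + u|x_n| + 2u\sum_{j=2}^{n-1}|x_j| + \mathcal{O}(u^2).
\end{equation*}
Then bound $|s_n| = |\sum_{k=1}^n x_k| \le \sum_{k=1}^n|x_k|$, and note $u|x_n| + 2u\sum_{j=2}^{n-1}|x_j| \le 2u\sum_{j=1}^n|x_j|$ (in fact $\le 2u\sum_{j=2}^{n}|x_j|$, which is even a hair tighter), so altogether $|e_n|\le 3u\sum_{j=1}^n|x_j| + \mathcal{O}(u^2)$.

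The main obstacle is bookkeeping rather than conceptual: one has to verify carefully that $\hat c_j = \mathcal{O}(u)$ uniformly (so that the $\hat c_j$ and $E_{n-1}$ contributions genuinely drop to second order), and one has to handle the slightly irregular summation limits — the exceptional early indices ($j=2,3$ where $\eta_2=0$ and where $\hat c_1=\hat c_2$ may vanish, and the split between the $x_n$ term and the interior sum) — so that the collected first-order terms match the claimed closed form exactly. I expect the cleanest writeup to lean on the second expression of Theorem~\ref{t_cs4}, where these cancellations are already largely organized, rather than on the first.
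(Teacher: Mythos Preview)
Your final recommendation---to use the second expression in Theorem~\ref{t_cs4}, drop $(1+\sigma_n)$ to $1$, observe that $E_{n-1}=\mathcal{O}(u^2)$ since each $e_j=\mathcal{O}(u)$ and each $\Theta_j,\delta_{j+1}=\mathcal{O}(u)$, and reduce $X_n$ to $x_n\eta_n+\sum_{j=2}^{n-1}x_j(\eta_j-\delta_j)+\mathcal{O}(u^2)$---is exactly the paper's proof, and your bound derivation is correct.

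One correction to your exploratory first paragraph: the claim that ``the $\hat c_j$ contributions are themselves only $\mathcal{O}(u^2)$'' is wrong. In fact $\hat c_j = s_j\sigma_j + x_j\delta_j + \mathcal{O}(u^2)$, which is genuinely first order. What happens in the first expression of Theorem~\ref{t_cs4} is that $s_j\sigma_j - \hat c_j(1+\eta_{j+1}) = s_j\sigma_j - (s_j\sigma_j + x_j\delta_j) + \mathcal{O}(u^2) = -x_j\delta_j + \mathcal{O}(u^2)$, so the $s_j\sigma_j$ terms \emph{cancel} rather than survive; combined with the $\sum x_j\eta_j$ sum this reproduces $\sum x_j(\eta_j-\delta_j)$. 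That route does work, but not for the reason you gave---which is presumably why you could not reconcile the surviving $\sum_{j=2}^{n-1}s_j\sigma_j$ with the claimed formula. The second expression already has this cancellation baked in, which is why both you and the paper prefer it.
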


\begin{proof}
This follows from the three expressions in Theorems \ref{thm:compExpr1}
and~\ref{t_cs4}, but is most easily seen from the second expression 
in Theorem~\ref{t_cs4}. There the first order error is extracted from the summands 
$s_n\sigma_n +X_n(1+\sigma_n)$,
where 
\begin{align*}
X_n&=x_n\eta_n(1+\beta_n)+\sum_{j=2}^{n-1}{x_j(\eta_j-\delta_j)
\prod_{\ell=j}^{n-1}{(1+\beta_{\ell})(1+\eta_{\ell+1})}}\\
&=x_n\eta_n+\sum_{j=2}^{n-1}{x_j(\eta_j-\delta_j)}+\mathcal{O}(u^2).
\end{align*}
\end{proof}

Corollary~\ref{c_cs5} suggests that the errors in the 'correction' steps
3 and~5 of Algorithm~\ref{alg:compensated} dominate the first order error. 

\begin{remark}\label{r_cfirst}
The existing bounds in \cite[Section 4.3, Appendix]{goldberg1991every}, \cite[(4.9)]{higham2002accuracy},
\cite[page 9-5]{Kahan73},
and \cite[Section 4.2.2]{Knuth2} are not consistent
with Corollary~\ref{c_cs5}, which implies that the computed sum equals
\begin{equation*}
\hat{s}_n=s_n\,(1+\sigma_n)+\sum_{j=2}^{n-1}{x_j(\eta_j-\delta_j)}
+x_n\eta_n+\mathcal{O}(u^2).
\end{equation*}
 In contrast, the expressions in \cite[Theorem 8]{goldberg1991every}, \cite[page 9-5]{Kahan73},
and \cite[Section 4.2.2]{Knuth2} are equal to
\begin{equation*}
\hat{s}_n=\sum_j{x_j(1+\phi_j)}+\mathcal{O}(nu^2)\sum_k{|x_k|}\qquad \text{where}\quad
|\phi_j|\leq 2u
\end{equation*}
implying the bound $|e_n|\leq 2u \sum_k{|x_k|}+\mathcal{O}(u^2)$.

The reason for the discrepancy with Corollary~\ref{c_cs5}
may be the focus of  \cite[Section 4.3, Appendix]{goldberg1991every} on the
coefficients of $x_1$, which is subjected to one less error than the
other $x_k$.
\end{remark}

\subsection{Second order deterministic and probabilistic bounds}\label{s_csecond}
For  the second order in Algorithm~\ref{alg:compensated}, we present an 
explicit expression and deterministic bound (Theorem~\ref{t_cs6}) and a 
probabilistic bound (Theorem~\ref{t_cs7}).

Theorem~\ref{t_cs6} specifies the second order coefficient in the existing bounds
\cite[Theorem 8]{goldberg1991every}, \cite[(4.9)]{higham2002accuracy},
\cite[Section 4.2.2]{Knuth2} and improves the coefficient $n^2u^2$ in 
\cite[page 9-5]{Kahan73} and \cite[Section4]{Neumaier}.

\begin{theorem}[Explicit expression and deterministic bound]\label{t_cs6}
With assumptions (\ref{e_model1}), define
$\mu_k\equiv \eta_k-\delta_k$, $2\leq k\leq n-1$,
and $\mu_n\equiv \eta_n$. Then 
Algorithm \ref{alg:compensated} has the second order error
\begin{align*}
    e_n = s_n\sigma_n +& (1+\sigma_n)\sum_{k=2}^n{x_k\mu_k}\\
        & -\sum_{k=2}^{n-1}{s_k\sigma_k(\mu_{k+1}+\delta_k+\beta_k)}\\
        &- \sum_{k=2}^{n-1}{x_k\delta_k(\mu_{k+1}+\beta_k+\eta_k)}
        + \mathcal{O}(u^3).
\end{align*}
with the bound
\begin{align*}
|\hat{s}_n-s_n|\leq (3u+4nu^2)\,\sum_{k=1}^n{|x_k|}+\mathcal{O}(u^3).
\end{align*}
\end{theorem}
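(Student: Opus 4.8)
The plan is to derive the second-order expansion from one of the exact expressions in Theorem~\ref{t_cs4}, and then to read off the deterministic bound by the triangle inequality. The most convenient starting point is the second expression,
\[
e_n = s_n\sigma_n + (X_n+E_{n-1})(1+\sigma_n),
\]
for exactly the reason it was convenient in Corollary~\ref{c_cs5}: every summand of $X_n$ carries an explicit factor $\eta_j$ or $\eta_j-\delta_j$, and every summand of $E_{n-1}$ carries a factor $\Theta_j$ or $\Theta_j+\delta_{j+1}$, each $\mathcal{O}(u)$, multiplying an $e_j=\mathcal{O}(u)$. Hence, to capture $e_n$ through $\mathcal{O}(u^2)$, one only needs the remaining products $\prod(1+\beta_\ell)(1+\eta_{\ell+1})$ to zeroth order and the quantities $e_j$ to first order. (The matrix form of Theorem~\ref{thm:compExpr1}, expanded using that each $P_\ell$ equals $\bigl[\begin{smallmatrix}1&-1\\0&0\end{smallmatrix}\bigr]+\mathcal{O}(u)$ with idempotent leading part, gives the same result and could serve as an independent check.)

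First I would expand $X_n$: replacing each $\prod_{\ell=j}^{n-1}(1+\beta_\ell)(1+\eta_{\ell+1})$ by $1+\sum_{\ell=j}^{n-1}(\beta_\ell+\eta_{\ell+1})+\mathcal{O}(u^2)$ and writing $x_n\eta_n(1+\beta_n)=x_n\eta_n+x_n\eta_n\beta_n+\mathcal{O}(u^3)$ expresses $X_n$ as $\sum_{k=2}^n x_k\mu_k$ plus the second-order double sum $\sum_{j=2}^{n-1}x_j\mu_j\sum_{\ell=j}^{n-1}(\beta_\ell+\eta_{\ell+1})$ plus $x_n\eta_n\beta_n$. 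Next I would expand $E_{n-1}$: replace $\Theta_j$, $\Theta_j+\delta_{j+1}$ and $\Theta_{n-1}$ by their first-order values, which all collapse to $-(\delta_j+\beta_j+\mu_{j+1})$ once one uses $\mu_{j+1}=\eta_{j+1}-\delta_{j+1}$ for $j\le n-2$ and $\mu_n=\eta_n$; drop the now-$\mathcal{O}(u^2)$ products to $1$; and substitute for each $e_j$ its first-order value $s_j\sigma_j+x_j\eta_j+\sum_{m=2}^{j-1}x_m(\eta_m-\delta_m)$ from Corollary~\ref{c_cs5}. This writes $E_{n-1}=-\sum_{j=2}^{n-1}\bigl(s_j\sigma_j+x_j\eta_j+\sum_{m<j}x_m\mu_m\bigr)(\delta_j+\beta_j+\mu_{j+1})+\mathcal{O}(u^3)$.

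Then I would add $X_n$ and $E_{n-1}$, multiply by $(1+\sigma_n)$, and keep terms through $\mathcal{O}(u^2)$. The $s_j\sigma_j$ part of $E_{n-1}$ already yields $-\sum_{k=2}^{n-1}s_k\sigma_k(\mu_{k+1}+\delta_k+\beta_k)$; the factor $\sigma_n$ acts nontrivially only on the first-order piece $\sum_{k=2}^n x_k\mu_k$, producing $(1+\sigma_n)\sum x_k\mu_k$; and the remaining $x$-type second-order terms (the double sum from $X_n$, the triple sum from $E_{n-1}$, the $x_j\eta_j(\delta_j+\beta_j+\mu_{j+1})$ terms, and $x_n\eta_n\beta_n$) should, after swapping summation orders and using the telescoping $\sum_{j>m}(\delta_j-\delta_{j+1})$ to cancel the $\sum(\beta_\ell+\eta_{\ell+1})$ contributions, collapse to $-\sum_{k=2}^{n-1}x_k\delta_k(\mu_{k+1}+\beta_k+\eta_k)$. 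This cancellation is the crux: verifying that the intermediate summation errors $\sigma_k$ with $k<n$ survive only in the combination $s_k\sigma_k$, and that the various double and triple sums combine exactly into the two compact correction sums with no sign or index-range slips, is where the real work lies; the rest is routine Taylor bookkeeping.

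For the bound I would take absolute values in the resulting expression and use $|s_k|\le\sum_{j=1}^n|x_j|\equiv S$, $|x_k|\le S$, $|\sigma_k|,|\delta_k|,|\beta_k|,|\eta_k|\le u$ and $|\mu_k|\le 2u$. The first-order part is bounded by $|s_n\sigma_n|+\bigl|\sum x_k\mu_k\bigr|\le uS+2uS=3uS$. For the second order, bound $-\sum_{k=2}^{n-1}s_k\sigma_k(\mu_{k+1}+\delta_k+\beta_k)$ by $4u^2\sum_{k=2}^{n-1}|s_k|\le 4(n-2)u^2S$, bound $-\sum_{k=2}^{n-1}x_k\delta_k(\mu_{k+1}+\beta_k+\eta_k)$ by $4u^2\sum_{k=2}^{n-1}|x_k|\le 4u^2S$ (keeping $\sum|x_k|$ intact here, rather than bounding each term by $S$, is what avoids a spurious extra factor of $n$), and bound the $\sigma_n\sum x_k\mu_k$ contribution by $2u^2S$, for a second-order total of at most $(4n-2)u^2S\le 4nu^2S$. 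Combining gives $|\hat s_n-s_n|\le(3u+4nu^2)\sum_{k=1}^n|x_k|+\mathcal{O}(u^3)$.
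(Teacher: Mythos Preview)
Your approach is sound but differs from the paper's. You work from the second exact expression in Theorem~\ref{t_cs4}, expand $X_n$ and $E_{n-1}$ to second order, substitute the first-order $e_j$ from Corollary~\ref{c_cs5}, and then telescope the resulting double sums. The paper instead works from the matrix form of Theorem~\ref{thm:compExpr1}: it expands each $P_k$ to first order, proves by induction that $P_k\cdots P_{j+1}=Q_{k:j+1}+\mathcal{O}(u^2)$ with an explicit $2\times 2$ matrix $Q_{k:j+1}$, expands $\widetilde{P}_j$ to second order, multiplies $Q_{n:j+1}\widetilde{P}_j$ out entrywise, and reads the top row. The matrix route avoids the interchange-and-telescope step entirely---the compact second-order sums fall out directly from the $(1,1)$ and $(1,2)$ entries of $Q_{n:j+1}\widetilde{P}_j$---so it is cleaner in that respect, while your route has the advantage of reusing Corollary~\ref{c_cs5} and making the passage from first to second order conceptually incremental. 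Your bound derivation matches the paper's line for line. One caution: when you carry out the telescoping, you will find a stray $x_n\eta_n\beta_n$ surviving from the $x_n\eta_n(1+\beta_n)$ term of $X_n$; this single $\mathcal{O}(u^2)$ term does not affect the deterministic bound, but it does not appear in the displayed second-order expression, so you should reconcile it (it is absorbed harmlessly into the $6u^2\sum|x_k|$ the paper uses as the non-$s_k$ second-order contribution, or reflects a minor discrepancy between the two exact expressions).
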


\begin{proof}
For the second order error in Theorem~\ref{thm:compExpr1},
it suffices to compute the products $P_k\cdots P_{j+1}$ to first order only.
The matrices in Lemma~\ref{l_cs1} equal to first order 
\begin{equation*}
P_k=\begin{bmatrix} 1+\sigma_k & -(1+\sigma_k +\eta_k)\\
\sigma_k& -(\sigma_k+\delta_k)\end{bmatrix} +\mathcal{O}(u^2).
\end{equation*}
Induction shows that $P_k\cdots P_{j+1} =Q_{k:j+1}+ \mathcal{O}(u^2)$
for $k>j+1$, where
\begin{equation}
    Q_{k:j+1}\equiv\begin{bmatrix}
    1 + \sigma_k & \mu_{j+1}-(1+\sigma_k)\\
    \sigma_k & -\sigma_k
    \end{bmatrix}, \qquad k>j+1.
\end{equation}
The matrix in Theorem~\ref{thm:compExpr1} is to second order
\begin{align*}
\widetilde{P}_k 
    = \begin{bmatrix}
    \sigma_k & \eta_k(1+\sigma_k)\\
    \sigma_k(1+\delta_k+\beta_k) & (1+\beta_k)\delta_k + \eta_k(\delta_k+\sigma_k)
    \end{bmatrix} + \mathcal{O}(u^3), \qquad 2\leq k\leq n.
\end{align*}
This implies for the matrix products with $n>j+1$
in Theorem~\ref{thm:compExpr1} 
\begin{align*}
    Q_{n:j+1}\widetilde{P}_j = 
    \begin{bmatrix}
    -\sigma_j(\mu_{j+1}+\delta_j+\beta_j) &  (1+\sigma_n)\mu_j
    -\delta_j(\mu_{j+1}+\beta_j+\eta_j)\\
    0 & \sigma_n\mu_j
    \end{bmatrix} + \mathcal{O}(u^3).
\end{align*}
For $j=n-1$ we have 
\begin{align*}
P_{n}\widetilde{P}_{n-1}=
\begin{bmatrix}-\sigma_{n-1}(\eta_n+\delta_{n-1}+\beta_{n-1})&
(1+\sigma_n)\mu_{n-1}-
\delta_{n-1}(\eta_n+\beta_{n-1}+\eta_{n-1})\\
-\sigma_{n-1}\delta_n& \sigma_n\mu_{n-1}-\delta_n\delta_{n-1}
\end{bmatrix}
\end{align*}
Substitute the above products into the recurrence for the second order error
\begin{align*}
    \begin{bmatrix}
    e_n \\ \hat{c}_n
    \end{bmatrix} = \widetilde{P}_n\begin{bmatrix}
    s_n \\ x_n\end{bmatrix} 
    +P_{n}\widetilde{P}_{n-1}\begin{bmatrix}
    s_{n-1} \\ x_{n-1}\end{bmatrix}
    + \sum_{j=2}^{n-2}Q_{n:j+1}\widetilde{P}_j\begin{bmatrix}
    s_j \\ x_j
    \end{bmatrix} + \mathcal{O}(u^3).
\end{align*}
The error is bounded up to second order by 
\begin{align*}
|e_n|&\leq (3u+6u^2)\sum_{k=1}^n{|x_k|}+4u^2\sum_{k=2}^{n-1}{|s_k|}
\leq (3u+6u^2+4(n-2)u^2)\sum_{k=1}^n{|x_k|}\\
&\leq (3u+4nu^2)\,\sum_{k=1}^n{|x_k|} *\mathcal{O}(u^3).
\end{align*}
\end{proof}

The probabilistic bound below is derived from the explicit expression
in Theorem~\ref{t_cs6}.

\begin{theorem}[Probabilistic second order bound]\label{t_cs7}
Let  $\delta_j$ and  $\epsilon_j$   be independent zero-mean random variables, 
then for any $0<\delta<1$ with probability at least $\delta$, the error in
Algorithm~\ref{alg:shiftSum} is bounded by
\begin{eqnarray*}
|\hat{s}_n-s_n|&\leq& u \left(2(1+3u)\sqrt{\sum_{k=1}^n{|x_k|^2}}
+\sqrt{|s_n|^2+16u^2\sum_{k=1}^{n-1}{|s_k|^2}}\right)\sqrt{2\ln(2/\delta)}+\mathcal{O}(u^3)\\
&\leq &u\left(2(1+3u)\|\mathbf{x}\|_2+\sqrt{1+16(n-2)u^2}\|\mathbf{x}\|_1\right)
\sqrt{2\ln(2/\delta)}+\mathcal{O}(u^3),
\end{eqnarray*}
where $\mathbf{x}=\begin{bmatrix} x_1 & \cdots & x_n\end{bmatrix}^T$.
\end{theorem}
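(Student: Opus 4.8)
The plan is to derive Theorem~\ref{t_cs7} directly from the explicit second order expression in Theorem~\ref{t_cs6} by repackaging it as a martingale and applying the Azuma--Hoeffding inequality (Lemma~\ref{l_azuma}), exactly as was done for Theorem~\ref{thm:firstOrderBound}. Recall that Theorem~\ref{t_cs6} writes $e_n$, up to $\mathcal{O}(u^3)$, as a sum of terms each of which is a single roundoff variable ($\sigma_k$, $\delta_k$, $\eta_k$, $\beta_k$) times a \emph{deterministic} coefficient built from the exact quantities $s_k$, $x_k$ — except for the genuinely second-order terms, which are products of two roundoffs. The first step is therefore to split $e_n$ into its first-order part $L_n \equiv s_n\sigma_n + \sum_{k=2}^n x_k\mu_k$ and a second-order remainder $R_n$ collecting the three double-sum corrections, so that $e_n = L_n + (\text{cross terms from }(1+\sigma_n)\sum x_k\mu_k) + R_n + \mathcal{O}(u^3)$; note $(1+\sigma_n)\sum_{k}x_k\mu_k = \sum_k x_k\mu_k + \sigma_n\sum_k x_k\mu_k$, and the latter is itself $\mathcal{O}(u^2)$.

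The second step is to order all the roundoff variables into a single sequence $\xi_1,\xi_2,\ldots$ respecting the order in which they are generated by Algorithm~\ref{alg:compensated} (so that mean-independence — here in fact full independence is assumed — applies), and build the martingale $Z_0 = 0$, $Z_i = \sum_{j\le i}(\text{contribution of }\xi_j\text{ to }e_n)$. Because each roundoff $\xi_j$ enters $e_n$ multiplied by a coefficient that, conditionally on the earlier $\xi$'s, is a fixed number of size at most (leading order coefficient) $+ \mathcal{O}(u)\cdot(\text{sums of }|s_k|,|x_k|)$, the martingale difference bound is $|Z_i - Z_{i-1}| \le c_j$ where, collecting contributions: the $\sigma_n$ slot contributes $\le u(|s_n| + \mathcal{O}(u)\sum|x_k|)$, each $\eta_k,\delta_k$ slot contributes $\le u(|x_k| + \mathcal{O}(u)(|x_k| + |s_k| + \cdots))$, and each $\beta_k$ slot contributes $\mathcal{O}(u^2)$-size coefficients times $|s_k|,|x_k|$. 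The key bookkeeping is to show $\sum_j c_j^2$ is bounded, to $\mathcal{O}(u^2)$ accuracy in the leading term, by $u^2\big(2(1+3u)\big)^2\sum_k |x_k|^2 + u^2(|s_n|^2 + 16u^2\sum_{k=1}^{n-1}|s_k|^2)$ — the factor $2$ coming from $\eta_k$ and $\delta_k$ both hitting $x_k$, the $1+3u$ from the $(1+\sigma_n)$ prefactor and from the $\mathcal{O}(u)$ tails, and the $16u^2$ from squaring the $4u$-type coefficients on $s_k$ in the correction sums of Theorem~\ref{t_cs6}. Then Lemma~\ref{l_azuma} gives the first displayed inequality, and the second follows from $\sum_{k=1}^{n-1}|s_k|^2 \le (n-2)\max_k|s_k|^2$ together with $|s_k|\le \|\mathbf{x}\|_1$, factoring $\sqrt{|s_n|^2 + 16(n-2)u^2\|\mathbf{x}\|_1^2} \le \sqrt{1+16(n-2)u^2}\,\|\mathbf{x}\|_1$ and $\sqrt{\sum|x_k|^2} = \|\mathbf{x}\|_2$.

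The main obstacle I expect is the careful attribution of each term of the Theorem~\ref{t_cs6} expansion to the correct martingale increment and the correct tracking of which contributions are $\mathcal{O}(u)$ versus $\mathcal{O}(u^2)$ in their coefficients, so that the constants $2(1+3u)$ and $16u^2$ come out exactly rather than with slack. In particular one must be careful that the cross-term $\sigma_n \sum_k x_k\mu_k$ and the correction double-sums do not get double-counted against the variables $\sigma_n,\delta_k,\eta_k$ that also appear at first order — the clean way is to assign, for each roundoff variable, its \emph{total} coefficient in the $\mathcal{O}(u^3)$-truncated expression, then bound that coefficient's absolute value. A secondary point is verifying the martingale property under Model~\ref{model:first} with the specified generation order $\xi_1,\xi_2,\ldots$ (and noting $\eta_2 = 0$, $\delta_1$ absent, etc., as in \eqref{e_model1}), which is routine given independence but should be stated. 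Everything else — the geometric-type bounds $(1+u)^k$, the passage from $\sum c_k^2$ to the $\ell_2$/$\ell_1$ mixed bound — is routine and parallels the proof of Theorem~\ref{t_sprob}.
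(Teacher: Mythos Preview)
Your plan is essentially the paper's own proof: start from the second-order expansion of Theorem~\ref{t_cs6}, package it as a martingale, apply Azuma--Hoeffding (Lemma~\ref{l_azuma}), then split $\sqrt{\sum c_k^2}$ with the $\ell_2$ triangle inequality and finish with $|s_k|\le\|\mathbf{x}\|_1$. The one substantive discrepancy is the \emph{ordering} of the martingale. The paper does not use the forward generation order you propose; it groups the roundoffs by iteration into tuples $\xi_k=(\sigma_k,\eta_k,\delta_k,\beta_k)$ and reveals them in \emph{reverse} order $\xi_n,\xi_{n-1},\ldots,\xi_2$ (last-in first-out, exactly as in the proof of Theorem~\ref{t_sprob}, which you yourself cite as the model).

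This matters for the constants. In reverse order, when the increment for iteration $j<n$ is formed, both $\sigma_n$ and $\mu_{j+1}$ have already been revealed, so $(1+\sigma_n)$ and $(\mu_{j+1}+\delta_j+\beta_j)$ sit there as known coefficients bounded by $1+u$ and $4u$; this is what produces the clean difference bounds $w_{k+1}=2(1+3u)|x_{n-k}|$ and $v_{k+1}=4u|s_{n-k}|$, hence exactly the $2(1+3u)$ and $16u^2$ in the statement. In your forward order, $(1+\sigma_n)$ is \emph{not} available when you process $x_k\mu_k$ for $k<n$, so the cross term $\sigma_n\sum_k x_k\mu_k$ must be deferred to the final $\sigma_n$ step; that increment is then bounded by $u(|s_n|+2u\|\mathbf{x}\|_1)$ rather than $u|s_n|$, and after squaring and summing you get a different split of $\sum c_k^2$ --- still correct to $\mathcal{O}(u^3)$, but not the displayed constants. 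Your own heuristic for the factor $1+3u$ (``from the $(1+\sigma_n)$ prefactor'') already implicitly assumes $\sigma_n$ is known before the $\mu_k$, i.e.\ reverse order. So keep the plan, but reverse the filtration.
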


\begin{proof}
Analogously to the proof of Theorem~\ref{t_sprob},
we construct a martingale by unravelling the second order error in Theorem~\ref{t_cs6} in a last-in first-out manner. Let  
$\mu_k=\eta_k-\delta_k$, $2\leq k\leq n-1$, $\mu_n=\eta_n$, $Z_0=0$, and
\begin{align*}
Z_1 &= s_n\sigma_n +x_n\mu_n(1+\sigma_n)\\
Z_k &= s_n\sigma_n  + x_n\mu_n(1+\sigma_n)
-\sum_{j=n-k+1}^{n-1}{s_j\sigma_j(\mu_{j+1}+\delta_j+\beta_j)}\\
&\qquad+ \sum_{j=n-k+1}^{n-1}{x_j
        \left(\mu_j(1+\sigma_n)-\delta_j(\mu_{j+1}+\beta_j+\eta_j)\right)},
        \qquad 2\leq k\leq n-1.
 \end{align*}       
Setting $\delta_n=\beta_n=\eta_2=0$ show that $Z_1$ is a function of 
$\xi_n\equiv(\sigma_n, \eta_n,\delta_n,\beta_n)$ while  $Z_k$ is a function of 
$\xi_j\equiv(\sigma_j,\eta_j,\delta_j, \beta_j)$ 
for $n-k+1\leq j\leq n-1$.

By assumption $\eta_j\equiv(\sigma_j,\eta_j,\delta_j, \beta_j)$ are tuples of 
$4(n-1)$  independent random variables that have zero mean and are bounded,  that is,
\begin{equation*}
\E[\sigma_j]=\E[|\eta_j]=E[\delta_j]=\E[\beta_j]=0,\quad \text{and}\quad
 |\sigma_j|, |\eta_j|, |\delta_j|,|\beta_j|\leq u, \qquad 2\leq j\leq n.
 \end{equation*}
Thus
$\E[Z_{k+1}|\xi_{n-k+1}, \ldots, \xi_n]=Z_k$,  $0\leq k\leq n-2$.
Combine everything to conclude that $Z_0,\ldots, Z_{n-1}$ is a Martingale in 
$\xi_{n},\ldots, \xi_2$.

The differences are  for $1\leq k\leq n-2$
\begin{align*}
Z_1-Z_0 &= s_n\sigma_n + x_n\mu_n(1+\delta_n)\\
Z_{k+1}-Z_k&=-s_{n-k}\sigma_{n-k}(\mu_{n-k+1}+\delta_{n-k}+\beta_{n-k})\\
&\quad +x_{n-k}\left(\mu_{n-k}(1+\sigma_n)-\delta_{n-k}(\mu_{n-k+1}+
\beta_{n-k}+\eta_{n-k})\right).
\end{align*}
Since $|\mu_k|\leq 2u$, $2\leq k\leq n-1$, the differences are bounded by
\begin{eqnarray*}
|Z_1-Z_0|&\leq & c_1\equiv u(v_1+w_1),\qquad
v_1\equiv |s_n|, \qquad w_1\equiv |x_n|(1+u)\\
|Z_{k+1}-Z_k|&\leq & c_{k+1}\equiv u(v_{k+1}+w_{k+1}), \qquad 1\leq k\leq n-2,
\end{eqnarray*}
where
$v_{k+1}\equiv 4 |s_{n-k}| u$ and $w_{k+1}\equiv 2 |x_{n-k}|(1+3u)$.

Lemma~\ref{l_azuma} implies that for any $0<\delta<1$ with probability at least $\delta$
\begin{equation*}
|\hat{s}_n-s_n|=|Z_{n-1}-Z_0|\leq \sqrt{\sum_{k=1}^{n-1}{c_k^2}}\sqrt{2\ln(2/\delta)}.
\end{equation*}
The triangle inequality implies the first  desired inequality 
\begin{equation*}
\sqrt{\sum_{k=1}^{n-1}{c_k^2}}\leq\sqrt{\sum_{k=1}^{n-1}{v_k^2}}+
\sqrt{\sum_{k=1}^{n-1}{w_k^2}},
\end{equation*}
while the second one follows from $|s_k|\leq \|\mathbf{x}\|_1$, $2\leq k\leq n$.
\end{proof}

The above immediately implies a first order bound. 

\begin{corollary}\label{c_cs8}
Let  $\delta_j$ and  $\epsilon_j$   be independent zero-mean random variables, 
then for any $0<\delta<1$ with probability at least $\delta$, the error in
Algorithm~\ref{alg:shiftSum} is bounded by
\begin{eqnarray*}
|\hat{s}_n-s_n|&\leq & u\left(2\|\mathbf{x}\|_2+|s_n|\right)
\sqrt{2\ln(2/\delta)}+\mathcal{O}(u^2),
\end{eqnarray*}
where $\mathbf{x}=\begin{bmatrix} x_1 & \cdots & x_n\end{bmatrix}^T$.
\end{corollary}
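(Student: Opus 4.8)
The plan is to derive Corollary~\ref{c_cs8} as a direct consequence of Theorem~\ref{t_cs7} by simply retaining only the leading-order terms. Starting from the first displayed bound in Theorem~\ref{t_cs7},
\begin{equation*}
|\hat{s}_n-s_n|\leq u\left(2(1+3u)\sqrt{\textstyle\sum_{k=1}^n|x_k|^2}+\sqrt{|s_n|^2+16u^2\textstyle\sum_{k=1}^{n-1}|s_k|^2}\right)\sqrt{2\ln(2/\delta)}+\mathcal{O}(u^3),
\end{equation*}
I would observe that $2(1+3u)=2+\mathcal{O}(u)$, so the first summand contributes $2u\|\mathbf{x}\|_2\sqrt{2\ln(2/\delta)}+\mathcal{O}(u^2)$. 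For the second summand, the term $16u^2\sum_{k=1}^{n-1}|s_k|^2$ inside the square root is $\mathcal{O}(u^2)$ relative to $|s_n|^2$ (or is itself $\mathcal{O}(u^2)$ in absolute terms after multiplication by the leading $u$), so $\sqrt{|s_n|^2+16u^2\sum_{k=1}^{n-1}|s_k|^2}=|s_n|+\mathcal{O}(u^2)\cdot(\text{bounded quantity})$, contributing $u\,|s_n|\sqrt{2\ln(2/\delta)}+\mathcal{O}(u^2)$ after the outer factor $u$ is applied. Adding the two pieces and absorbing everything of order $u^2$ and higher into $\mathcal{O}(u^2)$ gives exactly
\begin{equation*}
|\hat{s}_n-s_n|\leq u\left(2\|\mathbf{x}\|_2+|s_n|\right)\sqrt{2\ln(2/\delta)}+\mathcal{O}(u^2).
\end{equation*}

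The only subtlety worth stating carefully is the treatment of the $\sqrt{a^2+b^2}$ term: one should note that $\sqrt{|s_n|^2+16u^2\sum|s_k|^2}\leq |s_n|+4u(\sum_{k=1}^{n-1}|s_k|^2)^{1/2}$ by the triangle inequality for the Euclidean norm (as already used in the proof of Theorem~\ref{t_cs7}), and since $(\sum_{k=1}^{n-1}|s_k|^2)^{1/2}$ is a fixed data-dependent constant, the correction term $4u(\sum|s_k|^2)^{1/2}$ is $\mathcal{O}(u)$, hence contributes $\mathcal{O}(u^2)$ once multiplied by the outer $u$. This confirms that dropping it is legitimate within the stated error order.

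I do not anticipate any real obstacle here; the proof is a one-paragraph truncation argument. The main thing to be careful about is bookkeeping consistency with the $\mathcal{O}$-notation convention used throughout Section~\ref{s_compensated}, namely that the $\mathcal{O}(u^2)$ absorbs data-dependent constants (norms of $\mathbf{x}$), which is already the convention in Corollary~\ref{c_cs5} and Theorem~\ref{t_cs6}. One could alternatively cite Corollary~\ref{c_cs5} together with a martingale construction mirroring the proof of Theorem~\ref{t_cs7} restricted to first-order terms, but that would be redundant given Theorem~\ref{t_cs7} is already in hand; the truncation route is cleaner and is surely what the authors intend by ``The above immediately implies a first order bound.''
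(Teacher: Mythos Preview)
Your proposal is correct and matches the paper's approach exactly: the paper provides no separate proof for Corollary~\ref{c_cs8}, merely stating ``The above immediately implies a first order bound,'' so the intended argument is precisely the truncation of Theorem~\ref{t_cs7} to first order that you carry out. Your use of the triangle inequality $\sqrt{|s_n|^2+16u^2\sum|s_k|^2}\leq |s_n|+4u(\sum|s_k|^2)^{1/2}$ to handle the square-root term is the natural way to make the truncation rigorous.
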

\section{Numerical experiments}\label{s_numex}
After describing the setup, 
we present numerical experiments for shifted 
summation (Section~\ref{s_ne1}), compensated summation (Section~\ref{s_ne3}),
and a comparison of the two (Section~\ref{s_ne2}).

The unit roundoffs in the experiments are \cite{fp16}
\begin{itemize}
\item Double precision $u=2^{-53}\approx 1.11\cdot 10^{-16}$.\\
Here double precision is Julia Float64, while `exact' computations are performed
in Julia Float256.
\item Half precision $u=2^{-11}\approx 4.88 \cdot 10^{-4}$.
Here half precision is Julia Float16, while `exact' computations are performed
in Julia Float64.
\end{itemize}

We plot relative errors $|\hat{s}_n-s_n|/|s_n|$ rather than absolute errors
to allow for meaningful calibration:
Relative errors $\leq u$ indicate full accuracy; while
relative errors $\geq .5$ indicate zero accuracy.

For shifted summation we use the empirical mean of two extreme summands,
\begin{equation*}
c=(\min_k{x_k}+\max_k{x_k})/2.
\end{equation*}
For probabilistic bounds, the failure probability is $\delta=10^{-2}$, hence
$\sqrt{2\ln(2/\delta)}\approx 3.26$.

Two classes of summands will be considered:
\begin{itemize}
\item $x_k=m +y_k$ where $y_k$ are independent uniform$[0,1]$ random variables, 
with $0\leq y\leq 1$; and $m=10^4$
in Sections \ref{s_ne1} and~\ref{s_ne2} and  $m=0$ in Section~\ref{s_ne3}.\\
Since all summands have the same sign, this is a well-conditioned
problem with $|s_n|=\sum_{k=1}^n{|x_k|}$. 
The case $m=10^4$ is designed to increase the accuracy in shifted summation
since $x_k$ are tightly clustered at $10^4$. 
\item $x_k$ are normal$(0,1)$ random variables with mean 0 and variance 
Summands have different signs, which may lead to cancellation. The $x_k$
are naturally clustered around zero.
\end{itemize}

The plots show relative errors $|\hat{s}_n-s_n|/|s_n|$ versus $n$,
where $10\leq n\leq n_{max}$.
To speed up the computations, we compute  $x_1,\ldots, x_{n_{max}}$ once,
and then pick its leading $n$ elements as $x_1, \ldots, x_n$.

\subsection{Errors and bounds for sequential summation}\label{s_ne1}
We compare the errors from sequential versions of Algorithm \ref{alg:sum}
and~\ref{alg:shiftSum}, that is, sequential summation without and with shifting, and
an easy relative version of the probabilistic bound in Theorem~\ref{t_sprob}
\begin{equation}\label{e_ne1}
u \sqrt{n+2}\,\max_{1\leq k\leq n+1}{\frac{|s_k-kc|+|x_k-c|}{|s_n|}}\,
\sqrt{2\ln{(2/\delta)}}
\end{equation}

\begin{figure}
\begin{center}
\includegraphics[width = 2.5in]{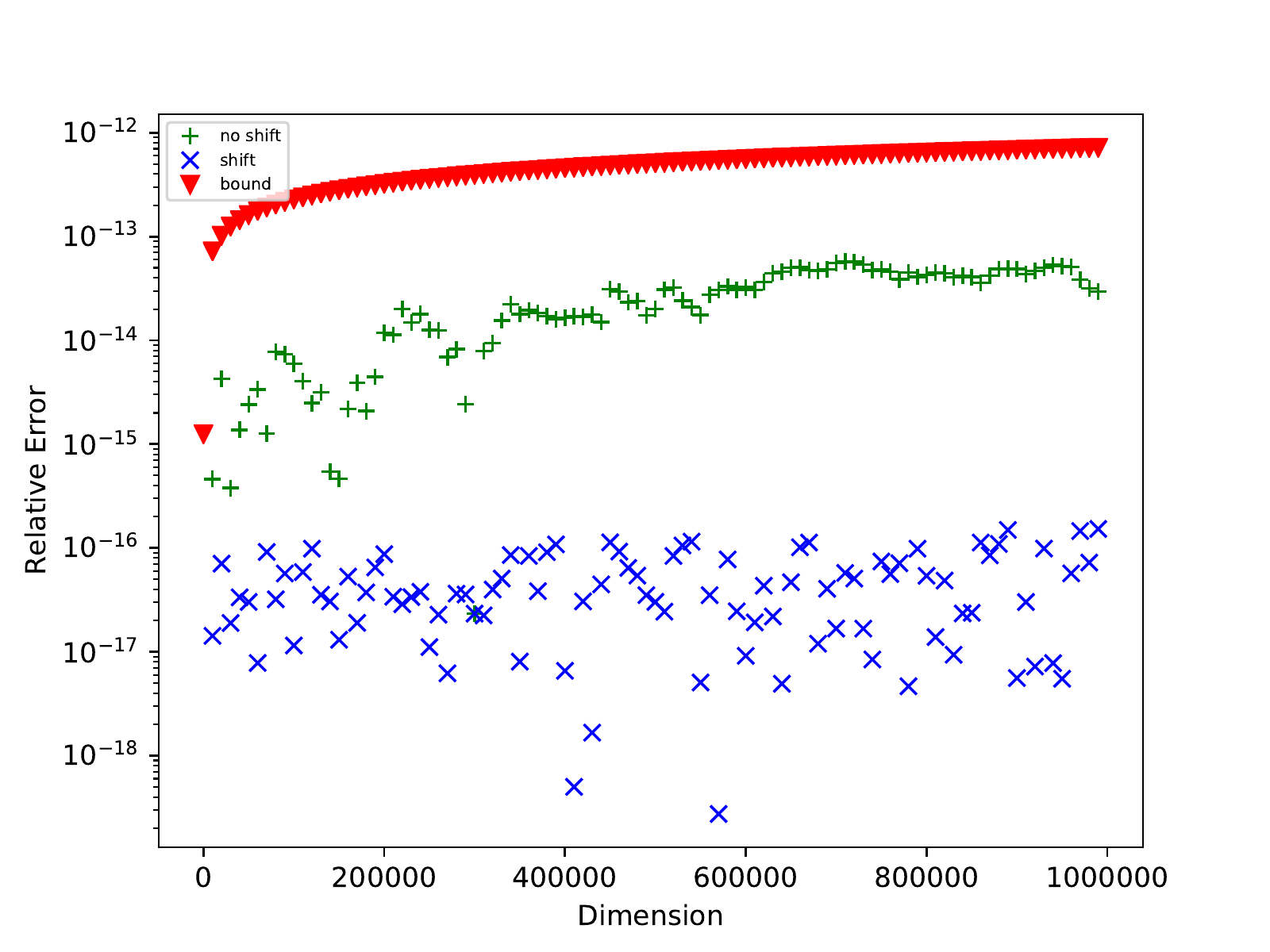}
\includegraphics[width = 2.5in]{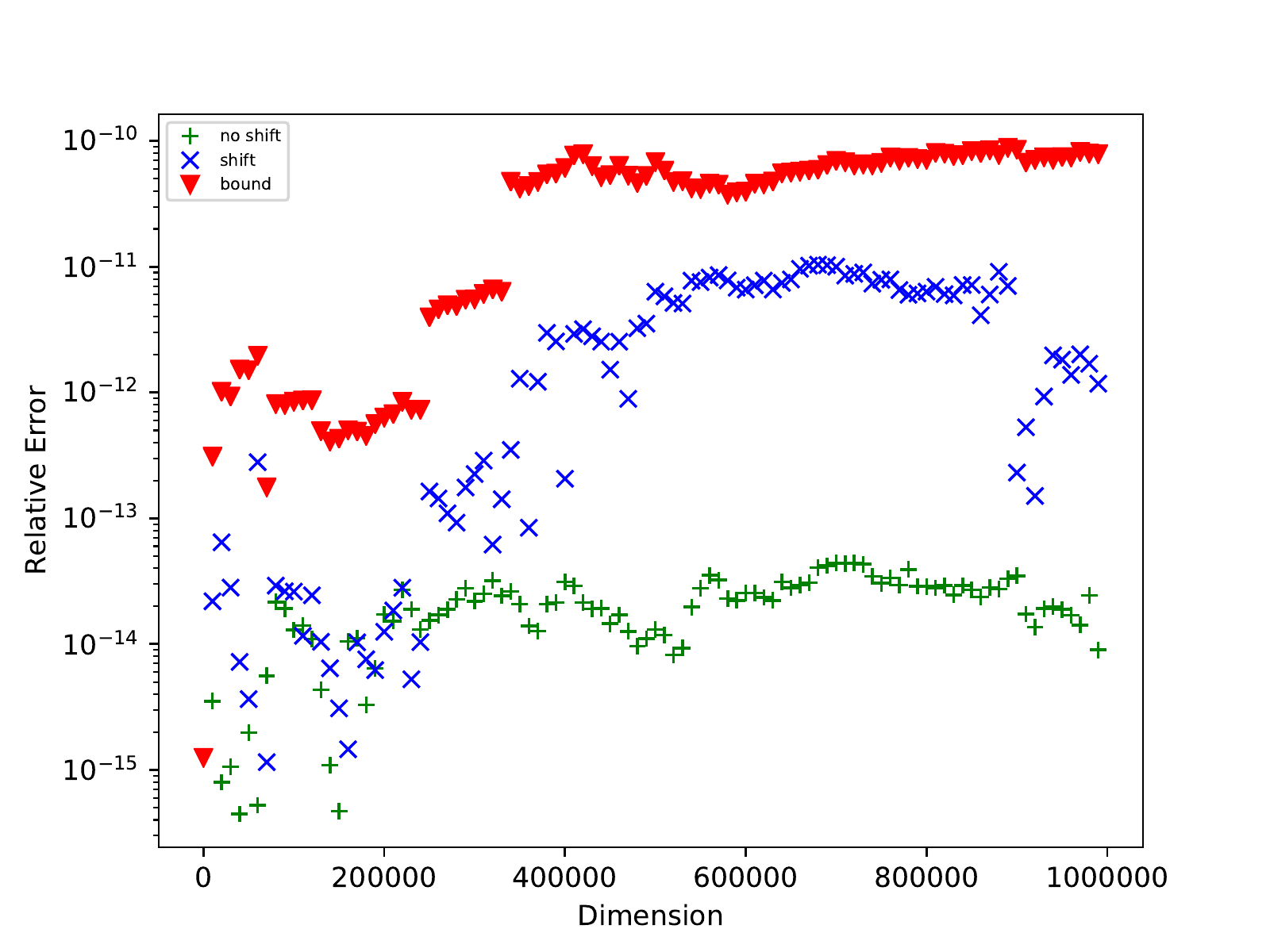}
\end{center}
\caption{Relative errors in plain
summation (green +) and summation with shift (blue x); and~(\ref{e_ne1})
(red triangle).
Left panel: $x_k=10^4+\mathrm{uniform}[0,1]$. Right panel: $x_k=\mathrm{normal}(0,1)$.}
\label{f_ne1}
\end{figure}

In Figure~\ref{f_ne1}, the range for the number of summands is $n=10:10^4:10^6$, 
and working precision is double with $u\approx 10^{-16}$.
Figure~\ref{f_ne1} illustrates that, compared to plain summation,
shifted summation can increase the accuracy
when summands are artificially and tightly clustered at a large mean, however
it can hurt the accuracy when summands are naturally clustered around zero.
For the normal(0,1) summands in the right panel, the range of shifts 
is $.003\leq c\leq  1.2$. The expression (\ref{e_ne1}) represents an upper bound
accurate within a factor 10-100.

\subsection{Comparison of shifted and compensated summation}\label{s_ne2}
We compare the errors from sequential summation with shifting 
(sequential version of Algorithm \ref{alg:sum}) and compensated summation
(Algorithm~\ref{alg:compensated}).

\begin{figure}
\begin{center}
\includegraphics[width = 2.5in]{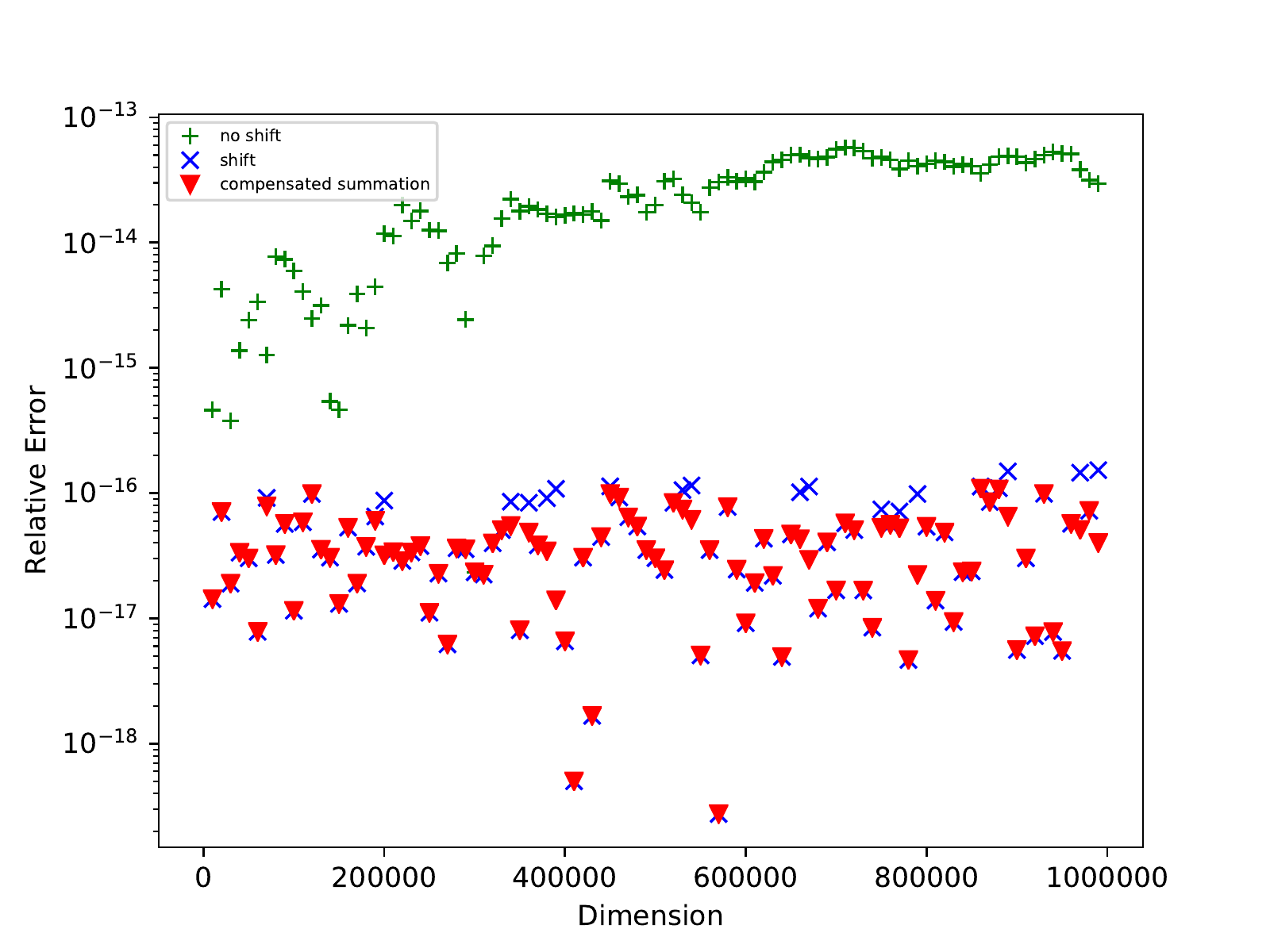}
\includegraphics[width = 2.5in]{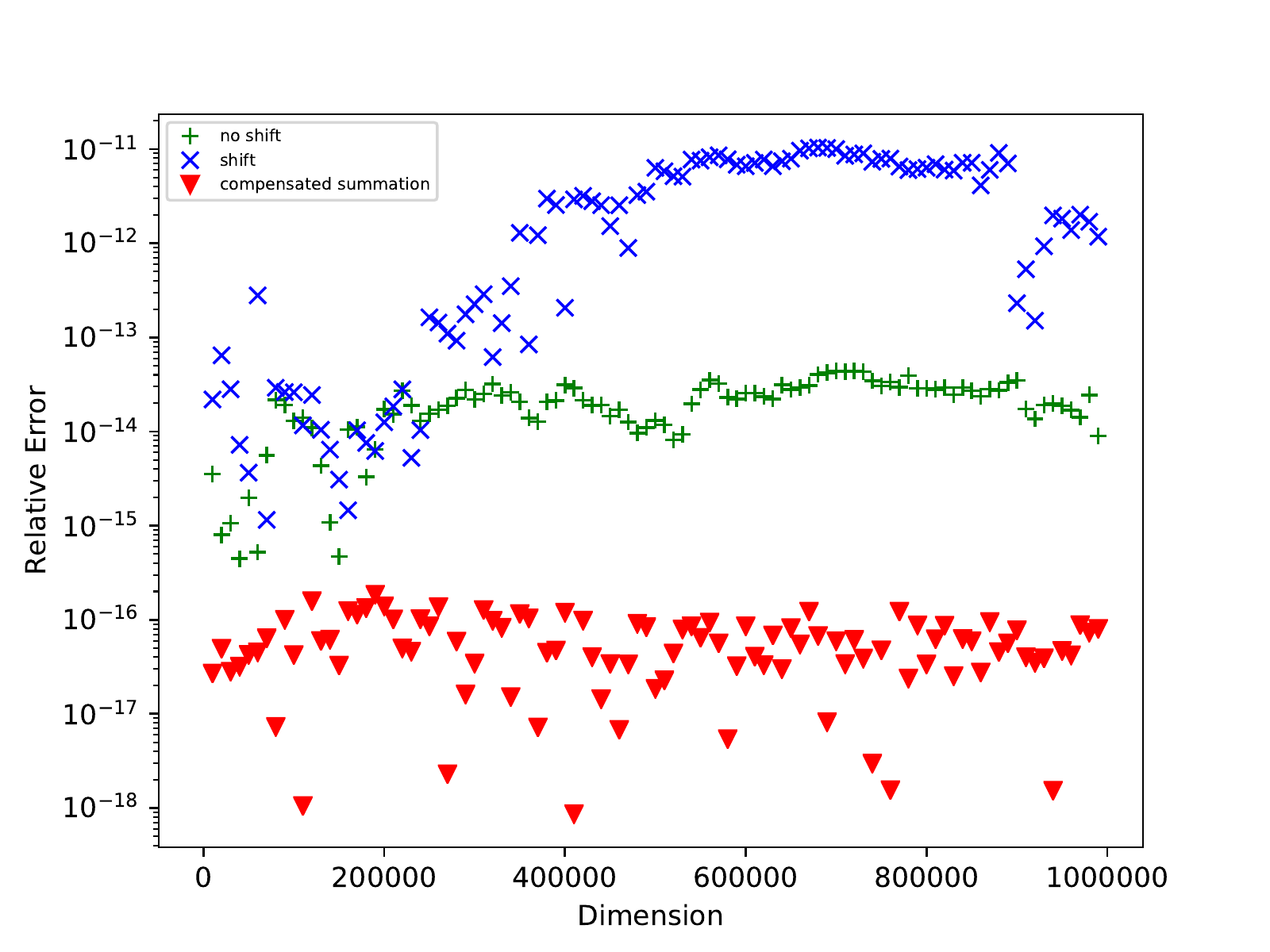}
\end{center}
\caption{Relative errors in plain
summation (green +), summation with shift (blue x), and compensated summation
(red triangle).
Left panel: $x_k=10^4+\mathrm{uniform}[0,1]$. Right panel: $x_k=\mathrm{normal}(0,1)$.}
\label{f_ne2}
\end{figure}

In Figure~\ref{f_ne2}, the range for the number of summands is $n=10:10^4:10^6$, 
and working precision is double with $u\approx 10^{-16}$.
Figure~\ref{f_ne2} illustrates that compensated summation is at least as
accurate and can be much more accurate than shifted summation. in both panels,
compensated summation is accurate to machine precision.

\subsection{Errors and bounds for compensated summation}\label{s_ne3}
We compare the errors from sequential summation (sequential version of Algorithm \ref{alg:sum})
and compensated summation (Algorithm~\ref{alg:compensated}) with the relative version of
the first order probabilistic bound in Corollary~\ref{c_cs8}
\begin{equation}\label{e_ne3}
u\,\frac{2\|\mathbf{x}\|_2+|s_n|}{|s_n|}\sqrt{2\ln(2/\delta)},\qquad
\mathbf{x}=\begin{bmatrix} x_1 & \cdots & x_n\end{bmatrix}^T.
\end{equation}

\begin{figure}
\begin{center}
\includegraphics[width = 2.5in]{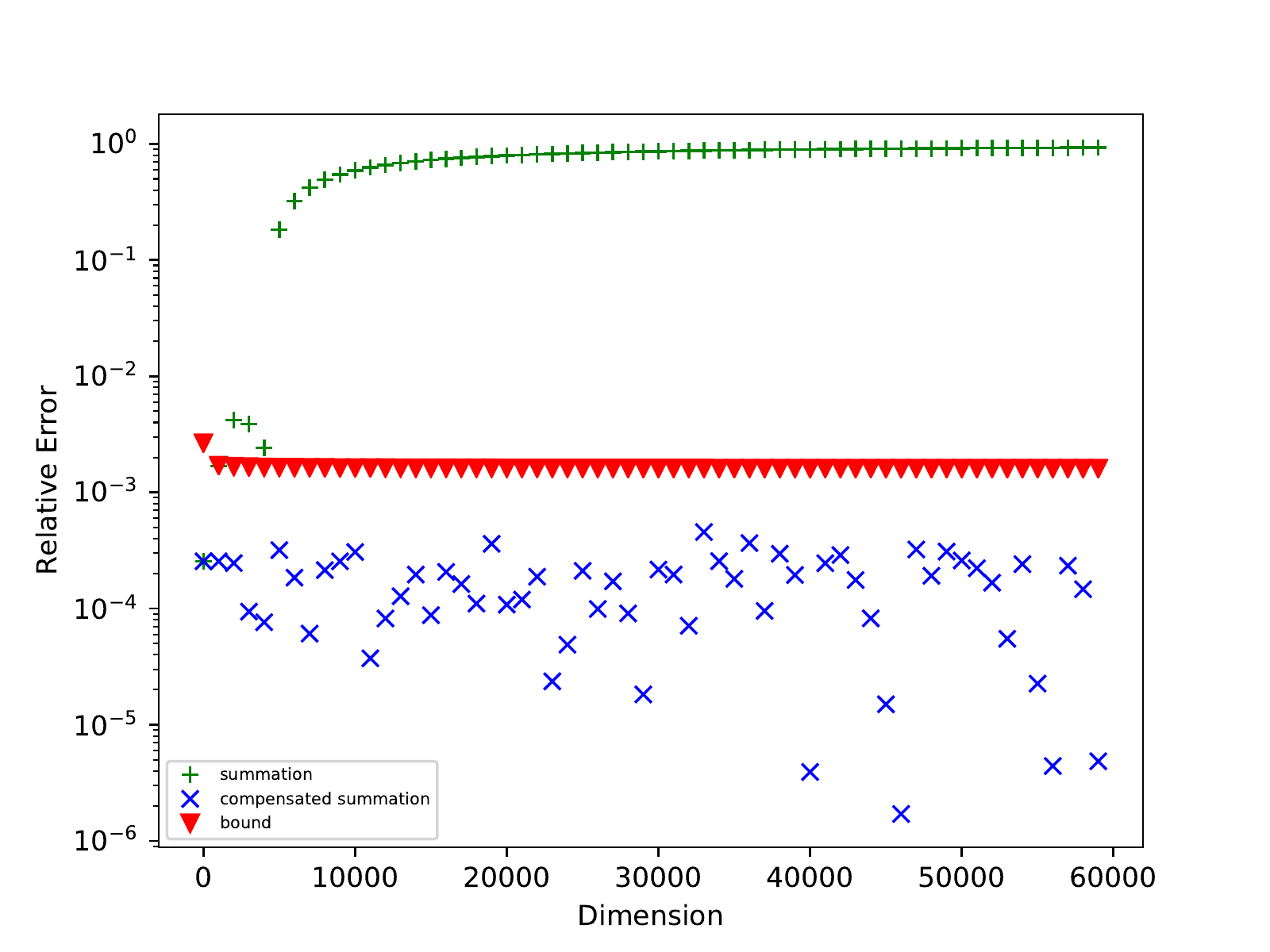}
\includegraphics[width = 2.5in]{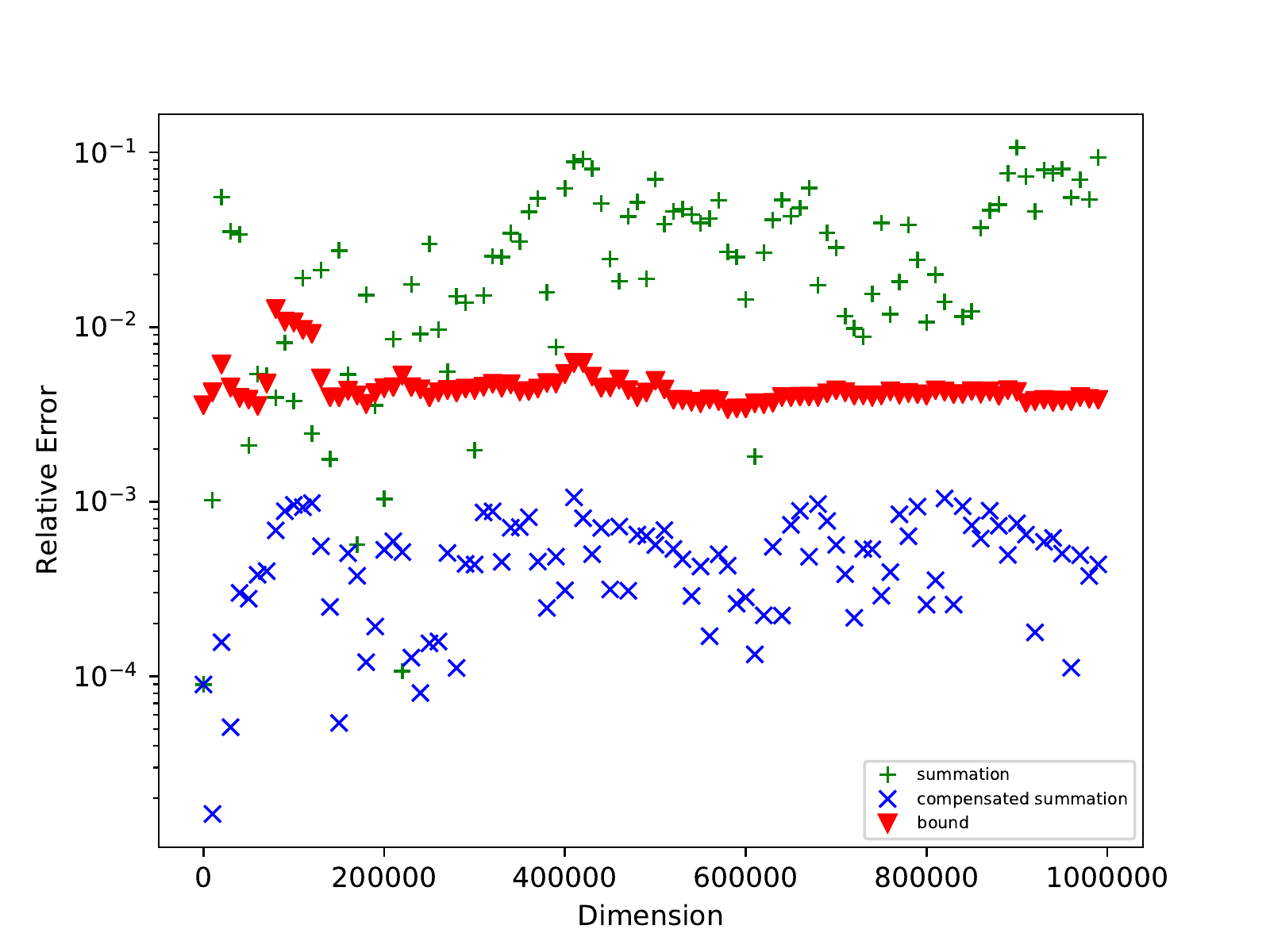}
\end{center}
\caption{Relative errors in plain
summation (green +), compensated (blue x), and~(\ref{e_ne3}) (red triangle).
Left panel: $x_k=\mathrm{uniform}[0,1]$.
Right panel: $x_k=\mathrm{normal}(0,1)$.}
\label{f_ne3}
\end{figure}

In Figure~\ref{f_ne3}, the working precision is double with $u\approx 5\cdot 10^{-4}$.
In the left panel the number of uniform(0,1) summands is $n=10:10^3:6\cdot 10^4$, 
to avoid overflow since the
largest value in binary16 is about 65,504. In the right panel
in contrast, the number of normal(0,1) summands is $n=10:10^4:10^6$, as summands
of different signs are less likely to cause overflow.

Figure~\ref{f_ne3} illustrates that, in contrast to plain summation,
compensated summation is accurate to machine precision, even for $n>u^{-1}$
in the right panel, and that (\ref{e_ne3}) represents an upper bound
accurate with a factor 10.

\subsection*{Acknowledgement}
We thank Johnathan Rhyne for helpful discussions.
\bibliographystyle{siamplain}
\bibliography{references}

\end{document}